\documentclass[10pt]{article}

\usepackage{preamble}

\begin{document}

\newcommand\restr[2]{{
  \left.\kern-\nulldelimiterspace 
  #1 
  \vphantom{\big|} 
  \right|_{#2} 
  }}

\makeatletter
\renewcommand{\@seccntformat}[1]{%
  \ifcsname prefix@#1\endcsname
    \csname prefix@#1\endcsname
  \else
    \csname the#1\endcsname\quad
  \fi}
\makeatother

\makeatletter
\newcommand{\colim@}[2]{%
  \vtop{\m@th\ialign{##\cr
    \hfil$#1\operator@font colim$\hfil\cr
    \noalign{\nointerlineskip\kern1.5\ex@}#2\cr
    \noalign{\nointerlineskip\kern-\ex@}\cr}}%
}
\newcommand{\colim}{%
  \mathop{\mathpalette\colim@{\rightarrowfill@\textstyle}}\nmlimits@
}
\makeatother

\newcommand\rightthreearrow{%
        \mathrel{\vcenter{\mathsurround0pt
                \ialign{##\crcr
                        \noalign{\nointerlineskip}$\rightarrow$\crcr
                        \noalign{\nointerlineskip}$\rightarrow$\crcr
                        \noalign{\nointerlineskip}$\rightarrow$\crcr
                }%
        }}%
}

\title{The spectrum of strict units of topological modular forms}         
\author{Zhenpeng Li and Kiran Luecke}        
\date{\today}          
\maketitle

\begin{abstract}
In the theory of spectral algebraic geometry, few objects receive as much study as the spectrum of topological modular forms. In this paper, we compute the strict units of topological modular forms, defined as the connective cover of the mapping spectrum from $\Z$ to the units spectrum.
\end{abstract}

\tableofcontents

\section{Introduction}
\subsection{Background and Main Results}

In classical algebra, the functor sending a commutative ring $R$ to its group of units $R^\times$ is corepresented by the free commutative ring on one invertible generator:
\[R^\times\simeq \mathrm{Hom}_\mathrm{CRing}(\Z[x^\pm],R).\]
Moreover, this corepresenting object coincides with the group algebra of the infinite cyclic group $\mathbb{Z}$. In homotopical algebra, the functor sending a commutative ring spectrum $R$ to its spectrum of units $\glone R$ is also corepresented by the free commutative ring spectrum on one invertible\footnote{Of degree 0.} generator
\[\glone R\simeq \Map_{\calg} (\S\{x^\pm\}, R)\]
but the corepresenting object is quite far from being the spherical group algebra $\mathbb{S}[\Z]$. Instead, one has a second notion of units, called the spectrum of \emph{strict} units $\G_m(R)$, which satisfies the equation
\[\G_m(R):= \Map_{\calg} (\S[\Z], R)\simeq\map_\Sp(\mathbb{Z},\glone R).\]
In many cases, it is the strict units which have favorable properties. For starters, $\S[\Z]$ is flat over $\S$, something which fails dramatically for $\S\{x^\pm\}$. Next, the functor $\G_m$ detects the chromatic height of $R$, and its torsion cousins $\mu_{p^k}(-):=\Map_{\calg} (\S[\Z/p^k],-)$, the ``higher roots of unity'', play a central role in recent duality results such as the chromatic Fourier transform \cite{BCSY}. Moreover, the strict units of spherical group rings are known to be well behaved (cf. \cite{CarmeliPic}, \cite{CNY}).
From a different angle, the units $\glone R$ control the twists for $R$-cohomology, but it is the strict ones which are usually of geometric interest: for example, the most naturally occurring twists of topological $K$-theory $\KU$ are those which come from degree 3 integral cohomology classes\footnote{Often via line-bundle-valued Cech cocycles}, which are manifestly strict units.

In this paper, we will consider the case $R=\TMF$, the spectrum of \emph{topological modular forms}. This was constructed by Goerss, Hopkins, and Miller (cf. \cite{HopkinsTMF}), by lifting the structure sheaf of the \'etale site of $\mathcal{M}_\mathrm{ell}$, the moduli stack of elliptic curves, to a sheaf of $\E_\infty$ ring spectra. The $\mathbb{E}_\infty$ ring $\TMF$ is then defined as the global sections of this sheaf. Thus, by construction, $\TMF$ is intimately related to the algebraic geometry of elliptic curves, and there is a forgetful map $\pi_*\TMF\rightarrow \MF_*$ to the classical ring of modular forms (graded by twice the weight). A comprehensive collection of results regarding $\TMF$ is \cite{TMFbook}. Somewhat mysterious, $\TMF$ is also related to the mathematical physics of 2-dimensional quantum field theories, and this relation has been an area of intense research since its conception by Segal, Stolz, and Teichner (cf. \cite{Segal}, \cite{StolzTeichner}). In short, performing spectral algebro-geometric constructions with $\TMF$ is a central tool for current researchers who range from homotopy theorists to mathematical physicists to geometric representation theorists.

In the present paper, we give a complete calculation of $\G_m\TMF$, with the hope of aiding these researchers in their programs. Our main result is the following:

\begin{introthm}\label[thm]{ThmA}
The following table describes the homotopy type of $\G_m\TMF$; the numbers on the right indicate the degree of the corresponding Eilenberg--Maclane spectrum, and $k> 1$.
\begin{align*}
    \G_m\TMF =
    &(\Z/2)^\infty &[0] \\
    &\prod_p\Z_p^{N_p} &[2] \\
    &\prod_p\Z_p^{N_p}\oplus A &[3] \\
    & A&[4k-1].
\end{align*}`
in which $A$ denotes the rational vector space defined as the cofiber of the obvious map
\[\Q\otimes \Z[x]\rightarrow \Q\otimes\prod_p\Z[x]_p,\]
and $N_p$ denotes the number of $\mathrm{Gal}(\Fpb/\F_p)$-orbits of isomorphism classes of supersingular elliptic curves over $\Fpb$. See \cref{mainthm} in \cref{sec7.2} for more details.
\end{introthm}

There are two variants of $\TMF$ which are of interest in the literature. The first is $\Tmf$, which is defined just like $\TMF$ with $\mathcal{M}_\mathrm{ell}$ replaced by its 1-point compactification. The second is $\tmf$, the connective cover of $\Tmf$. Thus there are canonical maps
\[\tmf\rightarrow\Tmf\rightarrow\TMF\]
and they induce isomorphisms
\[\tmf[(\Delta^{24})^{-1}]\simeq\Tmf[(\Delta^{24})^{-1}]\simeq\TMF,\]
where $\Delta^{24}$ is a certain homotopy class of degree 576 lifting the 24th power of the modular discriminant $\Delta\in \MF_*$.
For the sake of completeness, we record the following.
\begin{introthm}\label[thm]{ThmB}
    The canonical map
    \[\G_m\tmf\to\G_m\TMF\]
    identifies the source as the connected cover of the target.
\end{introthm}

\subsection{Proof Strategy and Organization}
To achieve these results, we first apply some standard techniques in stable homotopy theory, the arithmetic and chromatic fracture squares, to reduce the calculation to certain chromatic completions of $\TMF$. In this completed setting, there exist robust calculational tools involving power operations and descent results, which convert the computation of $\G_m$ to some classical facts about elliptic curves and modular forms.

The outline of the paper is as follows. \cref{sec2} provides a
necessary background for the classical theory on supersingular elliptic curves and modular forms. Then we treat the $K(2)$-local case in \cref{sec3} using descent results. Next, in \cref{sec4} we recall some facts about $K(1)$-local $\mathbb{E}_\infty$-ring spectra and their power operations. These tools will be employed in \cref{sec5} and \cref{sec6} to study the strict units of $L_{K(1)}\TMF$  and $L_{K(1)}L_{K(2)}\TMF$, respectively.
Finally, by assembling the relevant fracture squares, we complete the calculation of $\G_m\TMF$ in \cref{sec7}.

The calculation presented in this paper results from a synthesis of power operations and descent theory, each of which plays an indispensable role in the $K(1)$- and $K(2)$-local computations, respectively. The authors anticipate that the study of power operations for $K(n)$-local $\mathbb{E}_\infty$-rings at higher chromatic heights may serve as a new method for the computation of $\G_mL_{K(2)}\TMF$ and the global counterpart.

Despite the abstract equivalence between the strict units spectra of $\Tmf$ and $\TMF$ at many chromatic localizations, the corresponding canonical map fails to be an equivalence upon passage to $L_{K(1)}$-completion at prime 2. In \cref{rem_tmf_TMF_K(1)}, we show that this failure reflects the distinction of algebro-geometric objects defining these two $\mathbb{E}_\infty$-ring spectra. Moreover, an interesting asymptotic phenomenon occurs---the integer $N_p$ appearing in higher homotopy groups of $\G_m\TMF$ tends to infinity as $p$ grows, which is by no means obvious from the definition. We hope that the study of homotopical algebra might in turn shed new light on the enumerative problems of elliptic curves.

\subsection{Acknowledgements}
We thank David Gepner for expressing interest in the outcome of this calculation and encouraging us to do it. We thank Tilman Bauer, Rudradip Biswas, Shachar Carmeli, Langwen Hui, Hana Jia Kong, Guchuan Li, Lennart Meier, Charles Rezk, Ningchuan Zhang and Peter Rongping Zhou for helpful conversations and suggestions. We thank Hana Jia Kong for comments on a previous draft. We thank Alexander Lajeunesse for catching a group cohomology error in a previous draft.

\section{Facts about elliptic curves and modular forms}\label{sec2}
This section is intended to collect all the facts needed on elliptic curves in our paper, in particular the fundamental properties of supersingular elliptic curves over finite fields. Let us begin with the definition.
\begin{prop}\label[prop]{prop:supersingular_elliptic_curves}
    For an elliptic curve $E$ over a field $k$ of characteristic $p$, the following are equivalent.
    \begin{enumerate}
        \item $E[p]$ is connected.
        \item $[p]:E\to E$ is purely inseparable, and the $j$-invariant $j(E)\in \mathbb{F}_{p^2}$.
        \item The formal group associated to $E$ is of height 2.
        \item\label{supersingular} $\#E(\mathbb{F}_p)=p+1$, if additionally $k=\mathbb{F}_p$ and $p\geq5$.
    \end{enumerate}
   The elliptic curve $E$ satisfying any of these conditions is called supersingular. Otherwise, $E$ is called ordinary.
\end{prop}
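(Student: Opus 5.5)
We prove the equivalences by routing everything through the structure of the finite group scheme $E[p]$ and the factorization of $[p]$. We may base change to an algebraic closure $\bar k$, since each condition is insensitive to this: connectedness of $E[p]$ is geometric, pure inseparability and the height of the formal group are unchanged, and the $j$-invariant is unchanged. The key input is the standard factorization $[p]=V\circ F$, where $F:E\to E^{(p)}$ is the relative Frobenius (purely inseparable of degree $p$) and $V:E^{(p)}\to E$ is the Verschiebung. Since $\deg[p]=p^2$, the kernel $E[p]$ is a finite flat group scheme of rank $p^2$ with connected-\'etale sequence
\[0\to E[p]^\circ\to E[p]\to E[p]^{\mathrm{et}}\to 0.\]
Here $E[p]^{\mathrm{et}}(\bar k)=E[p](\bar k)$ has order $1$ or $p$, according to whether $V$ is inseparable or separable.

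For (1)$\Leftrightarrow$(3): the connected part $E[p]^\circ$ is exactly the $p$-torsion of the formal group $\hat E$. For a one-dimensional formal group of height $h$, the $p$-series has the form $[p](x)=u x^{p^h}+\cdots$, so $\hat E[p]$ has rank $p^h$, and $h\in\{1,2\}$ because the rank is at most $p^2$. Thus $E[p]$ is connected iff its rank $p^2$ equals $p^h$, iff $h=2$. For (1)$\Leftrightarrow$ the first half of (2): $[p]$ is purely inseparable iff its separable degree $\#\ker[p](\bar k)=\#E[p](\bar k)$ equals $1$, iff $E[p]^{\mathrm{et}}=0$, iff $E[p]$ is connected.

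The $j$-invariant clause is the hard part. In the connected case $V$ is inseparable, so $V$ factors through Frobenius, $V=\psi\circ F_{E^{(p)}}$, and comparing degrees shows that $\psi:E^{(p^2)}\to E$ is an isomorphism. Hence $j(E)=j(E)^{p^2}$, i.e.\ $j(E)\in\mathbb F_{p^2}$. The converse fails on its own, since ordinary curves can also have $j\in\mathbb F_{p^2}$. I would therefore read (2) as the conjunction of its two clauses, so that the $j$-condition is a consequence of the inseparability clause rather than something to be proved separately. Proving that the $j$-condition follows is the main obstacle, and the factorization-of-$V$ argument above handles it.

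For (4): take $k=\mathbb F_p$ and let $\pi$ be the $p$-power Frobenius endomorphism, with $\#E(\mathbb F_p)=p+1-a$ where $a=\operatorname{tr}\pi$ and $\pi^2-a\pi+p=0$. Then $V=\hat\pi=a-\pi$. Since $\pi$ is inseparable, $V$ is inseparable iff the multiplication map $[a]$ is inseparable, iff $p\mid a$. So $E$ is supersingular iff $a\equiv 0\pmod p$. Hasse's bound gives $|a|\le 2\sqrt p$, and $2\sqrt p<p$ for $p\ge5$, so $p\mid a$ iff $a=0$, iff $\#E(\mathbb F_p)=p+1$. This is exactly where the hypothesis $p\ge5$ enters.
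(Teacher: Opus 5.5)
Your proof is correct. The paper's own proof is only a citation of Silverman, Theorem V.3.1, and for (1)--(3) you have essentially reproduced that theorem's proof. The separable-degree count $\#E[p](\bar k)=\deg_s V$ with $\deg V=p$ handles (1) and the inseparability clause of (2). In the inseparable case, factoring $V$ through the Frobenius $E^{(p)}\to E^{(p^2)}$ gives $E^{(p^2)}\cong E$, hence $j(E)^{p^2}=j(E)$. Your reading of (2) as a conjunction, whose $j$-clause is only a necessary condition, is also how Silverman states and proves it.

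The only real difference in (1)--(3) is that you obtain (1)$\Leftrightarrow$(3) from the identification $E[p]^\circ=\hat E[p]=\mathrm{Spec}\,k[[x]]/([p](x))$, which has rank $p^h$. The classical treatment instead reads the height off as $\log_p\deg_i[p]$. Your version has the small advantage of addressing condition (1) as literally stated, i.e.\ connectedness of the group scheme rather than triviality of $E[p](\bar k)$.

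More substantively, the cited theorem does not contain (4) at all. Your trace-of-Frobenius argument therefore supplies something the paper's reference does not, and it correctly locates the need for $p\ge 5$ in the inequality $2\sqrt p<p$. One cosmetic point: rather than saying ``$[a]$ is inseparable'', which is degenerate when $a=0$, use the invariant differential $\omega$. Since $\pi^*\omega=0$, you get $V^*\omega=(a-\pi)^*\omega=a\omega$, so $V$ is inseparable iff $p\mid a$.
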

\begin{proof}
    See \cite[Theorem {\uppercase\expandafter{\romannumeral5}}.3.1]{silverman2009arithmetic}.
\end{proof}

\begin{example}\label[example]{example:supersingular_elliptic_curves_at_p=2}
When $k=\overline{\mathbb{F}}_2$, there is exactly one isomorphism class of supersingular elliptic curve. For instance, one can pick the equation $y^2+y=x^3$, and its $j$-invariant is equal to 0.
\end{example}
\begin{example}\label[example]{example:supersingular_elliptic_curves_at_p=3}
    When $k=\overline{\mathbb{F}}_3$, there is still only one isomorphism class of supersingular elliptic curve (e.g., $y^2=x^3-x$) with $j$-invariant 0.
\end{example}

Over $\overline{\mathbb{F}}_p$, the Frobenius automorphism induces an action on the isomorphism classes of elliptic curves, which preserves the property of being supersingular or not. To be more precise, it will send a supersingular elliptic curve $E$ to the relative Frobenius $E^{(p)}$. Hence, supersingular elliptic curves with $j$-invariants in $\mathbb{F}_p$ will be fixed, while others will be packed into an orbit of order 2 since $j(E^{(p^2)})=(j(E))^{p^2}=j(E)$.

In order for the computation in the sequel, we also need to count the number of supersingular elliptic curves and the number of orbits.
\begin{notation}\label[notation]{notation:Np}
    For a prime number $p$, denote by $S_p$ the collection of isomorphism classes of supersingular elliptic curves over $\overline{\mathbb{F}}_p$, and by $N_p$ the number of orbits from the Frobenius action.
\end{notation}
\begin{rem}
    The numbers $\#S_p$ and $N_p$ have already been studied to some extent when $p>3$: Using the class number $h(d)$ of the imaginary quadratic field $\mathbb{Q}(\sqrt{d})$, we have
    \begin{equation*}
\#S_p=\left[\frac{p}{12}\right]+\left\{
\begin{aligned}
    &0\enspace\mathrm{if}\enspace p\equiv 1\,(\mathrm{mod}\,12)\\
    &2\enspace\mathrm{if}\enspace p\equiv 11\,(\mathrm{mod}\,12)\\
    &1\enspace\mathrm{otherwise}.
\end{aligned}
\right.
\end{equation*}

\begin{equation*}
N_p=\frac{1}{2}\#S_p+
    \begin{cases}
        \frac{1}{4}h(-4p) &\mathrm{if}\enspace p\equiv 1\,(\mathrm{mod}\,4)\\
        h(-p)&\mathrm{if}\enspace p\equiv3\,(\mathrm{mod}\,8)\\
        \frac{1}{2}h(-p)&\mathrm{if}\enspace p\equiv 7\,(\mathrm{mod}\,8).\\
    \end{cases}
\end{equation*}
A standard proof was recorded in \cite[Theorem \uppercase\expandafter{\romannumeral5}.4.1]{silverman2009arithmetic} and \cite[Theorem 14.18]{cox2022primes}\footnote{For $N_p$, we should use \cref{prop:supersingular_elliptic_curves}.(\ref{supersingular}) to apply the formula therein.}. Besides, Ogg \cite{Ogg1980} also showed that the number of orbits of order 2 is equal to the genus of the modular curve $X^+_0(p)$, and that only for the prime divisor of the Monster group is the genus equal to 0.
\end{rem}

The following table lists the numbers $\#S_p$ and $N_p$ for small primes $p$, from which we can see that 37 is the first prime with $N_p<\#S_p$. And both of them approach $\infty$ as $p$ grows.

\begin{table}[H]
    \centering
    \begin{tabular}{|c|c|c|c|c|c|c|c|c|c|c|c|c|}
    \hline
     $p$&2&3&5&7&11&13&17&19&23&29&31&37\\
     \hline
     $\#S_p$&1&1&1&1&2&1&2&2&3&3&3&3\\ 
     \hline
     $N_p$&1&1&1&1&2&1&2&2&3&3&3&2\\
     \hline
    \end{tabular}
    \caption*{Numbers of $\#S_p$ and $N_p$, where 37 is the first prime such that $N_p<\#S_p$.}
\end{table}

Additionally, the automorphism groups of elliptic curves are easily classified in full generality, as the following proposition shows.

\begin{prop}\label[prop]{prop:automorphism_of_elliptic_curves}
Let $k$ be a field containing $\mathbb{F}_{p^2}$. For any elliptic curve $E$ over $k$, the automorphism group $\mathrm{Aut}(E)$ is classified as follows:
\begin{equation*}
    \mathrm{Aut}(E)=
    \begin{cases}
        \Z/2&\mathrm{if}\enspace j\neq0,1728\\
        \Z/6&\mathrm{if}\enspace j=0\enspace\mathrm{and}\enspace p\neq2,3\\
        \Z/4&\mathrm{if}\enspace j=1728\enspace\mathrm{and}\enspace p\neq2,3\\

        \Z/3\ltimes Q_8&\mathrm{if}\enspace j=0\enspace\mathrm{and}\enspace p=2\\
        
        \Z/4\ltimes \Z/3&\mathrm{if}\enspace j=0\enspace\mathrm{and}\enspace p=3.
    \end{cases}
\end{equation*}
\end{prop}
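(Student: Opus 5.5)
We follow the classical route (Silverman, Appendix A), treating $\mathrm{Aut}(E)$ as the group of changes of coordinates $x=u^2x'+r$, $y=u^3y'+u^2sx'+t$ that preserve a fixed Weierstrass equation for $E$. Since $k\supseteq\mathbb{F}_{p^2}$, all the roots of unity and auxiliary elements that occur below already lie in $k$, so we may compute over $k$ itself rather than over $\bar k$.

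\textbf{Case $p\neq 2,3$.} First put $E$ in short form $y^2=x^3+Ax+B$. An automorphism must then satisfy $r=s=t=0$, and it acts by $(x,y)\mapsto(u^2x,u^3y)$ with $u^4A=A$ and $u^6B=B$.
\begin{itemize}
\item If $j\neq 0,1728$, then $AB\neq0$, so $u^2=1$ and $\mathrm{Aut}(E)=\{\pm1\}\cong\Z/2$.
\item If $j=0$, then $A=0$ and $u^6=1$. This gives $\mu_6\cong\Z/6$, which lies in $k$ because $\mathbb{F}_{p^2}^\times$ has order $p^2-1$ and $6\mid p^2-1$.
\item If $j=1728$, then $B=0$ and $u^4=1$, giving $\mu_4\cong\Z/4$. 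Again $4\mid p^2-1$, so $\mu_4\subset\mathbb{F}_{p^2}$.
\end{itemize}

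\textbf{Case $p=3$, $j\neq0$.} Use the form $y^2=x^3+a_2x^2+a_6$ with $a_2\neq0$. The coordinate-change equations force $r=s=t=0$ and $u^2=1$, so $\mathrm{Aut}(E)=\Z/2$.

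\textbf{Case $p=3$, $j=0=1728$.} Use $y^2=x^3+a_4x+a_6$ with $a_4\neq0$. The conditions on an automorphism are $u^4=1$ and $r^3+a_4r+(1-u^2)a_6=0$, with $s=t=0$. So for each $u\in\mu_4$ there are exactly $3$ choices of $r$, for a total of $12$ automorphisms. These $r$ lie in $\mathbb{F}_{p^2}$ once we take the model $y^2=x^3-x$, since there $r^3-r=0$ gives $r\in\mathbb{F}_3$. The subgroup $\{r: u=1\}\cong\Z/3$ is normal, and $\mu_4$ acts on it by $r\mapsto u^2r$. This exhibits $\mathrm{Aut}(E)$ as the non-split-looking but in fact semidirect product $\Z/4\ltimes\Z/3$.

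\textbf{Case $p=2$, $j\neq0$.} Use $y^2+xy=x^3+a_2x^2+a_6$. The conditions force $u=1$, $r=s=0$ and $t\in\{0,x\text{-shift}\}$, and we find $\mathrm{Aut}(E)=\Z/2$.

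\textbf{Case $p=2$, $j=0$ (the main obstacle).} Use the model $y^2+y=x^3$ from \cref{example:supersingular_elliptic_curves_at_p=2}. The automorphism conditions become
\[u^3=1,\qquad s^4+s=0,\qquad t^2+t+r^3=0,\qquad r=s^2,\]
with $s\in\mathbb{F}_4$. This gives $3\cdot4\cdot2=24$ automorphisms, all defined over $\mathbb{F}_4\subseteq k$. To identify the group:
\begin{itemize}
\item The elements with $u=1$ form a normal subgroup of order $8$.
\item That subgroup has a unique element of order $2$, namely $-1$, which corresponds to $(s,t)=(0,1)$. It therefore has exactly $6$ elements of order $4$, and so it is $Q_8$.
\item The elements $u\in\mu_3$ with $r=s=t=0$ give a complement $\Z/3$, acting on $Q_8$ by permuting $i,j,k$ cyclically.
\end{itemize}
Hence $\mathrm{Aut}(E)\cong\Z/3\ltimes Q_8\cong SL_2(\mathbb{F}_3)$. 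The hardest bookkeeping is verifying the group law of the coordinate-change composites in this case; for it we rely on Silverman's explicit formulas.
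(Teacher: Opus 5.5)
Your case analysis is the classical computation the paper relies on: the paper's proof is only a citation of Silverman's classification (Theorem III.10.1 and Appendix A). As a computation of the geometric automorphism group $\mathrm{Aut}_{\bar k}(E)$ it is correct.

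\textbf{The false claim.} Your opening remark that, because $k\supseteq\mathbb{F}_{p^2}$, ``we may compute over $k$ itself'' is wrong for $p=2,3$ and $j=0$. There you replace $E$ by the models $y^2+y=x^3$ and $y^2=x^3-x$. A curve over $k$ with $j=0$ is in general only $\bar k$-isomorphic to these, and a twist can have fewer $k$-rational automorphisms.
\begin{itemize}
\item Over $\mathbb{F}_4$, take $y^2+\omega y=x^3$ with $\omega\notin\mathbb{F}_2$. The automorphism conditions are $u^3=1$, $s^4+\omega s=0$, $r=s^2$, $t^2+\omega t+s^6=0$. The nonzero $s$ satisfy $s^3=\omega$, which has no solution in $\mathbb{F}_4$. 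So only $6$ of the $24$ automorphisms are defined over $\mathbb{F}_4$.
\item Over $\mathbb{F}_9$, take $y^2=x^3-x+1$. For $u^2=-1$ the condition becomes $r^3-r=1$, which has no root in $\mathbb{F}_9$. So only $6$ of the $12$ automorphisms are rational.
\end{itemize}
The proposition is therefore only true, and should only be proved, for $\mathrm{Aut}_{\bar k}(E)$. That is what Silverman proves and what the paper actually uses, for curves over $\overline{\mathbb{F}}_p$. So base change to $\bar k$ at the start. Your rationality remark is valid only for $p\geq 5$, where every automorphism is $(x,y)\mapsto(u^2x,u^3y)$ with $u\in\mu_4$ or $\mu_6\subset\mathbb{F}_{p^2}$, and for $j\neq 0$ in characteristics $2,3$, where the group is $\{\pm1\}$.

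\textbf{Two smaller slips.}
\begin{itemize}
\item In characteristic $2$ with $j\neq0$, the nontrivial automorphism is $[-1]\colon (x,y)\mapsto(x,y+x)$, i.e.\ $(u,r,s,t)=(1,0,1,0)$. The conditions are $u=1$, $r=t=0$, $s^2+s=0$, so it is $s$ that varies, not $t$.
\item In the $Q_8$ step, a unique involution does not by itself give six elements of order $4$, since $\mathbb{Z}/8$ also has a unique involution. Instead use the composition law
\[[1,r_1,s_1,t_1]\,[1,r_2,s_2,t_2]=[1,\;r_1+r_2,\;s_1+s_2,\;t_1+t_2+s_1r_2].\]
It gives $[1,s^2,s,t]^2=[1,0,0,s^3]$. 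So every element squares to the identity or to $[-1]=[1,0,0,1]$, and the group has exponent $4$. This also proves the uniqueness of the involution that you only asserted. A group of order $8$ with exponent $4$ and a unique involution is $Q_8$.
\end{itemize}
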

\begin{proof}
    See \cite[Section \uppercase\expandafter{\romannumeral5}.10]{silverman2009arithmetic}.
\end{proof}

In the sequel, we need to analyze the value of the cyclotomic character at specific elements in the Morava stabilizer group, which is in turn related to the Dieudonn\'{e} theory of $p$-divisible groups. Regarding a supersingular elliptic curve $E$ over a finite field $k$, recall that the Dieudonn\'{e} module $\mathrm{DM}(E[p^\infty])$ is a free $\mathbb{W}(k)$-module of rank 2 (see \cite[Section 1.3]{lurie2013ambidexterity}). Hence, every endomorphism of $E$ admits a determinant associated to the action on $\mathrm{DM}(E[p^\infty])$.

\begin{prop}\label[prop]{prop:determinant_of_automorphisms}
    For any endomorphism $\varphi$ of a supersingular elliptic curve $E$ over a finite field $k$, the corresponding determinant on the Dieudonn\'{e} module $\mathrm{DM}(E[p^\infty])$ is equal to the degree of $\varphi$, $\mathrm{deg}(\varphi)$.
\end{prop}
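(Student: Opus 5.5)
The plan is to compare the Dieudonné-module determinant with the characteristic polynomial of $\varphi$ on a Tate module, and then use the fact that the determinant of an endomorphism on a Tate module is its degree. The difficulty is that $E$ is supersingular, so the $p$-adic Tate module $T_p(E)$ vanishes. We therefore cannot compute at $p$ directly. Instead we will show that the Dieudonné determinant is an integer that satisfies the same polynomial identity as the $\ell$-adic one for $\ell\neq p$.

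First, reduce to the statement $\det_{\mathbb{W}(k)}(\mathrm{DM}(\varphi)) = \varphi\circ\hat\varphi = \deg\varphi$. The assignment $\varphi\mapsto \mathrm{DM}(\varphi)$ defines a ring homomorphism $\mathrm{End}(E)\to \mathrm{End}_{\mathbb{W}(k)}(\mathrm{DM}(E[p^\infty]))\cong M_2(\mathbb{W}(k))$. The variance convention of $\mathrm{DM}$ does not matter, since the determinant is invariant under transpose and reversal of the order of multiplication. For any $2\times 2$ matrix $M$ we have the Cayley–Hamilton identity $M^2-\mathrm{tr}(M)M+\det(M)=0$. On the other side, $\varphi$ satisfies $\varphi^2-(1+\deg\varphi-\deg(1-\varphi))\varphi+\deg\varphi=0$ in $\mathrm{End}(E)$, because $\hat\varphi=t-\varphi$ with $t=\varphi+\hat\varphi\in\Z$ (Silverman III.6 and V.3). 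Applying $\mathrm{DM}$ gives a second monic quadratic relation for $M=\mathrm{DM}(\varphi)$, namely $M^2-tM+\deg\varphi=0$.

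The key step is to show that these two relations coincide, i.e.\ $\mathrm{tr}\,M=t$ and $\det M=\deg\varphi$. The first route is as follows. If $\varphi\in\Z$, say $\varphi=n$, then $M=n\cdot\mathrm{id}$ and $\det M=n^2=\deg[n]$, which settles this case. Otherwise $\Z[\varphi]$ is an imaginary quadratic order inside the quaternion algebra $\mathrm{End}^0(E)$. Then $\mathbb{W}(k)\otimes\Z[\varphi]\to M_2(\mathbb{W}(k)[1/p])$ is injective after rationalization, so the minimal polynomial of $M$ over $\mathbb{W}(k)[1/p]$ is $x^2-tx+\deg\varphi$. Since $M$ is $2\times2$ and is not a scalar (because $\varphi\notin\Q$ and the rationalized map is injective), its characteristic polynomial equals its minimal polynomial. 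This gives $\det M=\deg\varphi$.

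Injectivity follows from the fact that $\mathrm{End}(E)\otimes\Z_p\to\mathrm{End}(E[p^\infty])$ is injective (Tate's theorem over finite fields, or simply that an endomorphism killing $E[p^\infty]$ is divisible by arbitrarily high powers of $p$), combined with faithfulness of Dieudonné theory. The step I expect to be the main obstacle is making this injectivity and the non-scalar claim rigorous. Specifically, one must ensure that $\mathbb{W}(k)\otimes_{\Z}\Z[\varphi]$ acts faithfully and that $\mathrm{DM}$ does not collapse $\varphi$ to a scalar. If this becomes delicate, the fallback is to cite the standard fact that for an isogeny of $p$-divisible groups the $\mathbb{W}(k)$-length of $\mathrm{coker}\,\mathrm{DM}(\varphi)$ equals $\log_p$ of the order of $\ker\varphi[p^\infty]$. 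This yields $v_p(\det M)=v_p(\deg\varphi)$. Combined with the integrality and quadratic relation above, this pins down $\det M=\deg\varphi$.
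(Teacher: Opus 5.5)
Your reduction is sound, and it is a genuinely different route from the paper's. From $\varphi^2-t\varphi+\deg\varphi=0$ and Cayley--Hamilton, everything comes down to showing that $M=\mathrm{DM}(\varphi)$ is not a scalar matrix. Once that holds, its minimal polynomial over $K_0=\mathbb{W}(k)[1/p]$ has degree $2$, divides $x^2-tx+\deg\varphi$, and equals the characteristic polynomial.

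The argument you give for non-scalarity, however, does not work. Injectivity of $\mathrm{End}(E)\otimes\mathbb{Z}_p\to\mathrm{End}(E[p^\infty])\to\mathrm{End}_{\mathbb{W}(k)}(\mathrm{DM}(E[p^\infty]))$ is a statement about a $\mathbb{Z}_p$-linear map. It says nothing about the $\mathbb{W}(k)$-linear extension $\mathbb{W}(k)\otimes_{\mathbb{Z}_p}\mathbb{Z}_p[\varphi]\to M_2(\mathbb{W}(k))$. Suppose $\mathbb{Q}_p(\varphi)$ is the unramified quadratic extension and $\mathbb{F}_{p^2}\subseteq k$. Then the source becomes $K_0\times K_0$ after inverting $p$. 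Take $\lambda\in\mathbb{W}(k)\setminus\mathbb{Z}_p$ a root of $x^2-tx+\deg\varphi$. The ring map $\varphi\mapsto\lambda I$ is injective on $\mathbb{Z}_p[\varphi]$, yet it has determinant $\lambda^2\neq\lambda\bar\lambda=\deg\varphi$. Concretely, take $[i]$ on $y^2=x^3-x$ with $p\equiv 3\pmod 4$ and $k=\mathbb{F}_{p^2}$: nothing in your argument excludes $M=iI$ with $\det M=-1$. Automorphisms of exactly this kind are what the paper applies the proposition to.

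Your fallback fails for the same reason. In the supersingular case $x^2-tx+\deg\varphi$ is irreducible over $\mathbb{Q}_p$, so its two roots are conjugate and have the same valuation; hence $v_p(\lambda^2)=v_p(\deg\varphi)$ as well. Integrality in $\mathbb{W}(k)$ does not separate the cases either. You also never prove $\det M\in\mathbb{Z}$, and that would not exclude $\det M=-\deg\varphi$ when $t=0$, as the $[i]$ example shows.

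The missing ingredient is the Frobenius on the Dieudonn\'e module. $M$ commutes with the $\sigma$-semilinear operator $F$, which is bijective on $\mathrm{DM}\otimes K_0$. Writing $F=A\sigma$ in a basis gives $\sigma(M)=A^{-1}MA$, so the characteristic polynomial of $M$ has coefficients in $K_0^{\sigma=1}=\mathbb{Q}_p$; in particular a scalar $M=\lambda I$ would force $\lambda\in\mathbb{Z}_p$. You must also justify irreducibility of $x^2-tx+\deg\varphi$ over $\mathbb{Q}_p$ from supersingularity. One way is that $\mathbb{Q}_p\otimes\mathbb{Q}(\varphi)$ embeds into $\mathrm{End}^0(E[p^\infty]_{\bar k})$, the quaternion division algebra over $\mathbb{Q}_p$. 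These two facts force the characteristic polynomial of $M$ to be $x^2-tx+\deg\varphi$, and then $\det M=\deg\varphi$.

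For comparison, the paper sidesteps all of this. It transports the Weil pairing through Cartier duality, $\mathrm{DM}(G^\vee)\simeq\mathrm{Hom}_{\mathbb{W}(k)}(\mathrm{DM}(G),\mathbb{W}(k))$. This gives a perfect alternating form on $\mathrm{DM}(E[p^\infty])$ with $e(\varphi v_1,\varphi v_2)=\deg(\varphi)\,e(v_1,v_2)$, from which $\det=\deg$ follows in one line. That argument needs no case distinction, no quadratic relation, and no supersingularity hypothesis. Your repaired route avoids Dieudonn\'e duality, but needs the semilinear Frobenius and the division-algebra input instead.
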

\begin{proof}
    According to \cite[Example 1.4.18]{lurie2013ambidexterity}, for a $p$-divisible group, the Dieudonn\'{e} module of its Cartier dual is the $\mathbb{W}(k)$-linear dual of the original Dieudonn\'{e} module. In other words, we have
    \[\mathrm{DM}(E[p^\infty]^\vee)\simeq\mathrm{Hom}_{\mathbb{W}(k)}(\mathrm{DM}(E[p^\infty]),\mathbb{W}(k)).\]
    As a consequence, the Weil pairing $E[p^\infty]\simeq E[p^\infty]^\vee$, as a pairing defined on the $p$-divisible group $E[p^\infty]$, endows the Dieudonn\'{e} module with a nondegenerate skew form $e(-,-)$. For the bilinear form $e(-,-)$ on $\mathrm{DM}(E[p^\infty])$, we know ${\mathrm{deg}(\varphi)}e(v_1,v_2)=e(\varphi(v_1),\varphi(v_2))$, since the similar formula holds in terms of Weil pairing. Then the proposition follows from simple algebraic deduction:

    Assume that $v_1,v_2$ form a basis of $\mathrm{DM}(E[p^\infty])$, and that $\varphi$ acts on this basis by the matrix
    $\begin{pmatrix}
        a&b\\
        c&d
    \end{pmatrix}.$ Then we have the following chain of equations
    \[\mathrm{deg}(\varphi)e(v_1,v_2)=e(av_1+cv_2,bv_1+dv_2)=(ad-bc)e(v_1,v_2),\]
    by bilinearity and skewness.
\end{proof}

Last but not least, we will conclude this section with some identities from modular form theory that will also appear in $\pi_*\TMF$.
\begin{prop}\label[prop]{prop_modular_form_identities}
The following identities hold:
    \begin{enumerate}[label={$(\arabic*)$}]
        \item The $j$-invariant $j=\frac{c_4^3}{\Delta}$, where $j$, $c_4$ and $\Delta$ are regarded as elements in the graded ring $MF_*\simeq\Z[c_4,c_6,\Delta^\pm]/(c_4^3-c_6^2-1728\Delta)$.
        \item At infinity, the $j$-invariant admits the $q$-expansion as follows
        \[j=q^{-1}+744+196884q+21493760q^2+\dots.\]
        \item The normalized Eisenstein series $E_{p-1}$ is of the form $E_4^{\epsilon_1}E_6^{\epsilon_2}\Delta^kf(j)$, where $\epsilon_1,\epsilon_2\in\{0,1\}$ depending on the remainder of $p$ modulo 12, $k=\left[\frac{p}{12}\right]$, and $f(j)$ is a $p$-integral polynomial w.r.t $j$ of degree $\leq k$. Moreover, the roots of $f(j)$ in $\Fpb$ are simple and exactly the supersingular $j$-invariants excluding 0 and 1728. More precisely, denote by $ss_p(j)=\prod(j-a_i)$ the supersingular polynomial, and then we have
        \[ss_p(j)=\pm j^{\epsilon_1}(j-1728)^{\epsilon_2}f(j)\]
        in $\F_p$. Here, $\{a_i\}$ runs over all of the supersingular $j$-invariants in $\F_{p^2}$.
    \end{enumerate}
\end{prop}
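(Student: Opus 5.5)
The three identities are classical facts from the theory of modular forms over $\Z$ and modulo $p$. The plan is to deduce each from standard structure results rather than from scratch.

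For (1), I would use Deligne's description of integral modular forms. From the Weierstrass cubic, with $c_4,c_6,\Delta$ defined through the usual $b_i$-invariants, one has the identity $c_4^3-c_6^2=1728\Delta$. It holds in the universal ring $\Z[a_1,\dots,a_6]$ and hence in $\MF_*$. The $j$-invariant of a Weierstrass curve is \emph{defined} as $c_4^3/\Delta$, a weight-zero element of $\MF_*[\Delta^{-1}]$. So (1) amounts to recalling the definition together with Deligne's presentation $\MF_*\simeq\Z[c_4,c_6,\Delta^{\pm}]/(c_4^3-c_6^2-1728\Delta)$ for the ring over $\mathcal M_{\mathrm{ell}}$; I would cite this presentation (Deligne, ``Courbes elliptiques: formulaire'').

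For (2), I would evaluate on the Tate curve, where
\[c_4=E_4=1+240\sum\sigma_3(n)q^n,\qquad \Delta=q\prod(1-q^n)^{24}.\]
Expanding $E_4^3/\Delta$ to the first few orders gives the stated coefficients. This is a routine computation, and I would only check the first terms.

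For (3), which carries the real content, I would argue in four steps.

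First, decompose $E_{p-1}$. It is a modular form of weight $p-1$ with $p$-integral coefficients. Writing $p-1=12k+4\epsilon_1+6\epsilon_2$ with $\epsilon_i\in\{0,1\}$ determined by $p\bmod 12$, every modular form of that weight can be written uniquely as $E_4^{\epsilon_1}E_6^{\epsilon_2}\Delta^k f(j)$ with $\deg f\le k$. This holds because $E_4^{\epsilon_1}E_6^{\epsilon_2}\Delta^k$ has the minimal forced zeros at $\rho,i$ and weight-zero quotients are polynomials in $j$ with bounded pole order at the cusp. $p$-integrality of $f$ follows from $p$-integrality of the $q$-expansion together with the $q$-expansion principle, using that $j$ has a simple pole and leading coefficient $1$.

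Second, relate this to supersingularity. Modulo $p$, $E_{p-1}$ reduces to the Hasse invariant $A$ (Deligne/Serre–Katz), and $A$ vanishes exactly at the supersingular curves. By Igusa's theorem these zeros are simple.

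Third, compare with $ss_p$. The factor $E_4$ vanishes only at $j=0$ and $E_6$ only at $j=1728$, with the appropriate multiplicity conventions in weight-zero terms. Hence $j^{\epsilon_1}(j-1728)^{\epsilon_2}f(j)\bmod p$ has exactly the supersingular $j$-invariants as roots, and these roots are simple. The degrees match by the count of $\#S_p$ recorded above. Both sides are then monic up to a unit, which gives $ss_p=\pm j^{\epsilon_1}(j-1728)^{\epsilon_2}f$.

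The main obstacle is the multiplicity bookkeeping at $j=0,1728$. One must show that the zero of $A$ at those points is accounted for entirely by the $E_4$, $E_6$ factors, and not partly by $f$, while remaining simple in $j$. I would handle this by citing Kaneko–Zagier, ``Supersingular $j$-invariants, hypergeometric series, and Atkin's orthogonal polynomials,'' which states exactly this form of the identity, and Silverman for $\#S_p$.
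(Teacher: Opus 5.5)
Your proposal is correct and essentially matches the paper, which treats (1) as standard, cites Rademacher for (2), and cites Kaneko--Zagier for (3); Kaneko--Zagier is also the reference you rely on for the multiplicity bookkeeping at $j=0,1728$. Your direct expansion of $E_4^3/\Delta$ for (2) is more elementary than the Rademacher citation and suffices, since only the first coefficients are claimed. In (3), the sign $\pm$ (rather than an arbitrary unit) comes from $f$ being monic, which your $q$-expansion argument already gives via $E_{p-1}=1+O(q)$ and $j=q^{-1}+\cdots$.
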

\begin{proof}
    Statement (1) is standard.
    Statement (2) was originally proved by Rademacher in \cite{rademacher1938fourier}. For (3), a concise proof can be found in \cite{kaneko1997supersingular}.
\end{proof}

As pointed out in \cite[Example 1.6.16]{BehrensHandbook}, the normalized Eisenstein series $E_{p-1}$ is the $v_1$-element of $\TMF_p$. And we will see that this decomposition of $E_{p-1}$ splits $\mathrm{Spec}(\pi_0L_{K(2)}\TMF)$ and $\mathrm{Spec}(\pi_0L_{K(1)}L_{K(2)}\TMF)$ into $N_p$ pieces of connected components.

\section{$\G_mL_{K(2)}\TMF$}\label{sec3}
In this section we calculate $\G_mL_{K(2)}\TMF$ via descent---by identifying $L_{K(2)}\TMF$ as the homotopy fixed points of certain Morava $E$-theories, whose spectrum of strict units are known due to work of Hopkins--Lurie, Barthel--Carmeli--Schlank--Yanovski, and Burklund--Schlank--Yuan, the problem reduces to a low-degree group cohomology calculation.

First, recall (cf. \cite{GoerssHopkins}) that given a finite height $n\geq 1$ formal group law $F$ over a perfect field $L$ of characteristic $p$ there exists a 2-periodic $\E_\infty$-ring spectrum $E_n(L)$ called the ``Morava $E$-theory'' (the formal group law $F$ is suppressed from the notation) with $\pi_*E_n(L)\simeq W(L)[[u_1,...,u_{n-1}]]_p[u^\pm]$ where $W(L)$ denotes the $p$-typical Witt vectors, $|u_i|=0$ and $|u|=2$. Moreover, if $K\to L$ is a Galois extension of such fields with Galois group $G$, and the formal group laws are compatible with this extension, then $E_n(K)\simeq E_n(L)^{hG}$. When $L$ is algebraically closed, the strict units in such $\E_\infty$-rings is completely understood:

\begin{thm}[{\cite[Theorem 8.17, Remark 8.18]{BSY}}]\label[thm]{thm:strict_units_of_algebraically_closed_field}
    Let $E_n(L)$ be as above, and suppose that $L$ is algebraically closed. Then we have an equivalence of connective $\Z$-modules
    \begin{equation*}
    \G_mE_n(L)=\Z_p[n+1]\oplus L^\times.
    \end{equation*}
    Moreover, under the $p$-complete equivalence $\mathrm{colim}_kC_{p^k}[n]=C_{p^\infty}[n]\simeq\Z_p[n+1]$, the first summand is the colimit of the \emph{primitive} height $n$ $p^k$-th roots of unity of $E_n(L)$.  
\end{thm}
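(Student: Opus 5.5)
Since this theorem is quoted from Burklund--Schlank--Yuan, I will only sketch how one reconstructs it, not re-derive all inputs. The approach is to compute $\G_mE_n(L)=\tau_{\geq0}\map_\Sp(\Z,\glone E_n(L))$ by splitting $\Z$ along its arithmetic fracture square. Because $E_n(L)$ is $K(n)$-local, I first use the logarithm to control $\glone E_n(L)$ in high degrees. The Bousfield--Kuhn functor, or equivalently Rezk's logarithm, identifies $L_{K(n)}\glone E\simeq L_{K(n)}E$ away from low degrees. From this, $\map(\Z,\glone E)$ splits into a discrete part and a $K(n)$-local part:
\begin{itemize}
\item the discrete part comes from $\pi_0$ and is governed by the units $\pi_0E^\times=W(L)[[u_i]]^\times$;
\item the $K(n)$-local part is $\map(\Z,L_{K(n)}\glone E)\simeq \map(L_{K(n)}\Z, \dots)$, which vanishes for $n\ge1$ since $L_{K(n)}H\Z=0$.
\end{itemize}
The content of the theorem therefore lies in a correction term coming from the fiber of $\glone E\to L_{K(n)}\glone E$. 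That fiber is concentrated in degrees $\le n+1$ and is precisely what produces the shift $\Z_p[n+1]$.

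The concrete steps are as follows. First, compute $\pi_0\G_mE=\Hom(\Z,\pi_0\glone E)$ restricted to strict units. Strict units must be compatible with all power operations, i.e.\ they are fixed by the $\theta$-type operations. The Teichmüller lift $L^\times\to W(L)^\times$ gives strict units, and one shows that $1+\mathfrak m$ contributes nothing strict: there the logarithm is injective and lands in a $K(n)$-local term killed by $\map(\Z,-)$. This produces the summand $L^\times$ in degree $0$. Second, show that higher roots of unity supply a class in degree $n+1$. Use $\mu_{p^k}(E)=\Map_{\calg}(\S[\Z/p^k],E)$ and the identification of $\S[\Z/p^k]$ after $K(n)$-localization with $E$-valued functions on $C_{p^k}[n]$ (the chromatic Fourier transform / Hopkins--Lurie ambidexterity). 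This gives $\mu_{p^k}(E)\supseteq \Z/p^k[n]$ coming from primitive height-$n$ roots, with $E$ algebraically closed in the sense of BSY, i.e.\ Lubin--Tate with $L=\bar L$. Taking the colimit over $k$ and using the cofiber sequence $\Z\to\Z[1/p]\to C_{p^\infty}$, the $C_{p^\infty}[n]$ contributions assemble into $\Z_p[n+1]$ via the $p$-complete equivalence stated above. Third, show there is nothing else. Here I would use the fact that $\map(\Z[1/p],\glone E)$ is computed from the $p$-divisible part, and that $\pi_*$ of $\glone E$ in positive degrees is $p$-complete, so $\map(\Q,-)$ vanishes on it.

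The main obstacle is the second and third steps: proving that $\mu_{p^\infty}(E_n(L))$ is exactly $C_{p^\infty}[n]$ (together with the discrete $L^\times$ torsion-free part), with no additional classes in intermediate degrees. My first attempt would be to use Hopkins--Lurie's computation of $\Map_{\calg}(\S[B^nC_{p^k}],E)$ via Lubin--Tate orientations, dualized through ambidexterity. The algebraic closedness of $L$ enters in two places: it makes the relevant $p$-divisible group of height $1$ (the top exterior power of $\widehat{\G}[p^\infty]$) constant, and it makes the connected-component count trivial. I then expect the splitting as connective $\Z$-modules to follow automatically, since $\mathrm{Ext}$ between $L^\times[0]$ and $\Z_p[n+1]$ vanishes for degree reasons.
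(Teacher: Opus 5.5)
The paper gives no argument for this statement; it is imported from \cite{BSY}. Your sketch therefore has to stand on its own, and it does not yet: the steps that carry the content are asserted or deferred.

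First, the logarithm does not produce a splitting. What it gives is $\map(\Z,\glone E)\simeq\map(\Z,F)$ with $F=\mathrm{fib}(\glone E\to L_{K(n)}\glone E)$, because $\map(\Z,L_{K(n)}\glone E)=0$. Your claim that $F$ is concentrated in degrees $\le n+1$ and ``precisely produces'' $\Z_p[n+1]$ is unsupported, since it is a statement about the Rezk logarithm on every $\pi_{2k}E$. Even granting the truncation, $\tau_{\ge0}\map(\Z,F)$ cannot be read off from $\pi_*F$. One has $\pi_i\map(\Z,\Sigma^{n+1}H\Z_p)\cong H^{n+1-i}(H\Z;\Z_p)$, and $H^{2p-1}(H\Z;\Z_p)\cong\Z/p$ (the integral Bockstein of $P^1$, or of $Sq^2$ when $p=2$). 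So whenever $n\ge 2p-2$, the naive picture $\tau_{\ge0}F\simeq\Sigma^{n+1}H\Z_p\oplus HL^\times$ would put a spurious $\Z/p$ into $\pi_{n+2-2p}\G_mE$. For $p=2$, $n=2$ (the case this paper needs) that is a nontrivial strict structure on $1\in\pi_0E$, which the theorem rules out. The real content is thus the exact computation $\mu_{p^k}(E)\simeq\Sigma^n\Z/p^k$. This means $\pi_0\Map_{\calg}(\S_{K(n)}[B^mC_{p^k}],E)$ is trivial for $0\le m<n$ and equals $\Z/p^k$ for $m=n$. You name this as the main obstacle and leave it open. The orientation computation you propose, of $\Map_{\calg}(\S_{K(n)}[B^nC_{p^k}],E)$, supplies the degree-$n$ classes but does not address $m<n$.

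Second, your $\pi_0$ step fails as stated at $p=2$, where the paper applies this theorem. There $-1=1-2\in1+\mathfrak{m}$, and the logarithm kills it, being a homomorphism into the torsion-free group $\pi_0E$. So ``the logarithm is injective on $1+\mathfrak m$'' is false, and nothing in your argument excludes $-1$ from $\pi_0\G_mE$, although $L^\times$ has no $2$-torsion. Excluding it needs a genuine power-operation input. For a strict unit $a$, the total power operation $P_2(a)$ is $a^2$ pulled back from a point. By contrast, additivity gives $P_2(-1)=\mathrm{tr}(1)-1=[2](t)/t-1\neq 1$ in $E^0(B\Sigma_2)=\pi_0E[[t]]/[2](t)$. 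At height one this is the fact $\delta(-1)=-1\neq0$, which the paper uses in Section~\ref{sec5}. At odd $p$ and $n\ge2$ you also give no argument for injectivity of the logarithm on $1+\mathfrak m$. Even granting it, you only bound the image of $\pi_0\G_mE\to\pi_0E^\times$, not its kernel (strict structures on $1$), which is exactly where the spurious class above would sit.

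Your reduction is a sound skeleton: (a) $\pi_0\G_mE\cong L^\times$ via Teichm\"uller lifts, and (b) $\mu_{p^k}(E)\simeq\Sigma^n\Z/p^k$ for all $k$. Together with $p$-completeness of the non-Teichm\"uller part of $\glone E$, (a) and (b) imply the theorem. The final splitting is then automatic, since every $H\Z$-module is a sum of shifted Eilenberg--MacLane spectra. But neither (a) nor (b) is established in your outline.
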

We will need to calculate $\G_mE(L)^{hG}$ for various groups $G$ acting on $E(L)$ via $\E_\infty$-ring maps. To that end, recall that the space of $\E_\infty$-automorphisms of $E(L)$ is discrete, and is called the \emph{extended Morava stabilizer group} $\G_n$ (the notational clash with $\G_m$ is unfortunate but unavoidable). This group sits in a split exact sequence 
\[\S_n\to\G_n\to\mathrm{Gal}(L/K),\]
where $K$ is the smallest Galois subfield containing the coefficients of $F$. 

The Morava stabilizer group $\G_n$ also admits a preferred homomorphism to $\Z_p^\times$, known as the \emph{cyclotomic character} (cf. \cite[Definition 5.6]{CSY})
\[\chi:\G_n\rightarrow\Z_p^\times.\]

\begin{cor}[{cf. \cite[Remark 8.18]{BSY}}]\label[cor]{cor_action}
 The action of $\G_n$ on $\G_mE_n(L)$ is described as follows: it acts on $L^\times$ via the map to $\mathrm{Gal}(L/K)$ and on $\Z_p[n+1]$ via the cyclotomic character $\chi$.
    
\end{cor}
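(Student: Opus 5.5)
Since the splitting of \cref{thm:strict_units_of_algebraically_closed_field} is functorial, I will treat the two summands separately and identify the $\G_n$-action on each. The plan is to use the naturality of the Burklund--Schlank--Yuan equivalence with respect to $\E_\infty$-automorphisms of $E_n(L)$. The equivalence $\G_mE_n(L)\simeq\Z_p[n+1]\oplus L^\times$ comes from two pieces. The discrete summand $L^\times$ is obtained from the composite $\pi_0\G_mE_n(L)\to(\pi_0E_n(L))^\times\to L^\times$, split by the Teichmüller-type strict lift: strict units in degree $0$ factor through the residue field, which is perfect. The second summand $\Z_p[n+1]$ is the colimit of the spectra of primitive $p^k$-th roots of unity $\mu_{p^k}E_n(L)=\Map_{\calg}(\S[\Z/p^k],E_n(L))$. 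Any $g\in\G_n$ acts by an $\E_\infty$-ring automorphism, so it acts on $\G_m$, on each $\mu_{p^k}$, and on the residue field compatibly. It therefore suffices to identify the action on each piece.

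For $L^\times$, I would use that an element of $\S_n$ acts on $E_n(L)$ by a map reducing to the identity on the residue field $L$, because $\S_n$ consists of automorphisms of $F$ over $L$. The splitting $\G_n\to\mathrm{Gal}(L/K)$ records exactly the induced action on $\pi_0E_n(L)/\mathfrak m=L$. Since the map $\pi_0\G_mE_n(L)\to L^\times$ is an isomorphism and equivariant, the action on $L^\times$ factors through $\mathrm{Gal}(L/K)$ acting by field automorphisms.

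For $\Z_p[n+1]$, I would invoke the definition of the cyclotomic character from \cite[Definition 5.6]{CSY}. The spectrum of height $n$ primitive $p^k$-th roots of unity is a torsor, in the appropriate sense, under $(\Z/p^k)^\times$ acting through $\Z/p^k$. The cyclotomic character is precisely the homomorphism through which $\G_n$ acts on these compatible systems of primitive roots. Passing to the colimit, $g$ acts on $C_{p^\infty}[n]\simeq\Z_p[n+1]$ by multiplication by $\chi(g)$. Finally, the action on the direct sum is diagonal and has no off-diagonal component. This holds because $\mathrm{Hom}$ between a shifted $\Z_p$ in degree $n+1\ge 2$ and a discrete group in degree $0$ vanishes in either direction, at least on homotopy, which is what "described as follows" asserts.

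I expect the main obstacle to be matching conventions. One must check that the cyclotomic character of \cite{CSY} is defined via the same primitive roots of unity appearing in \cite[Remark 8.18]{BSY}, and not via their inverse or some twist. I would handle this by citing the remark directly, since it states the equivariance.
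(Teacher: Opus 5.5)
Your proof is correct, but for the $\Z_p[n+1]$ summand it takes a different route from the paper. On the $L^\times$ summand it matches the paper: you reduce to the residue field, the paper looks at Teichm\"uller lifts in $W(L)^\times$, and either way $\S_n$ acts trivially and only the Galois quotient acts.

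On the $\Z_p[n+1]$ summand the two arguments differ as follows.

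The paper sets $H=\ker\chi$ and quotes \cite[Theorem 5.8]{CSY} to identify $E_n(L)^{hH}$ with $\S_{K(n)}[\omega^{(n)}_{p^\infty}]$. It then factors the map $\S_{K(n)}[B^nC_{p^\infty}]\to E_n(L)$ classifying the summand through $E_n(L)^{hH}$, and concludes that $H$ acts trivially.

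You instead use the definition of $\chi$ directly. Postcomposition with $g$ on the $(\Z/p^k)^\times$-torsor $\Map_{\calg}(\S_{K(n)}[\omega^{(n)}_{p^k}],E_n(L))$ is precomposition with $\chi(g)$; this torsor is nonempty by \cref{thm:strict_units_of_algebraically_closed_field}. So $g\circ\omega=\omega\circ\chi(g)$ for a compatible primitive system $\omega\colon C_{p^\infty}[n]\to\G_mE_n(L)$. Since $\omega$ is a $p$-complete equivalence onto the summand, $g$ acts there by multiplication by $\chi(g)$.

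What your route buys:
\begin{itemize}
\item It bypasses the Galois-theoretic identification of $E_n(L)^{h\ker\chi}$.
\item It proves the sharper claim that the character is exactly $\chi$. The paper's argument, as written, only shows that the action factors through $\G_n/\ker\chi$. That is all the paper uses later, since only elements with $\chi=1$ matter. Pinning down the exact character along the paper's route would also require matching the residual $\Z_p^\times$-action with the Galois action on the cyclotomic extension.
\end{itemize}

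What the paper's route buys: it depends only on Theorem 5.8, not on the exact formulation of \cite[Definition 5.6]{CSY}. That is precisely the convention issue you flag.

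Two small points.
\begin{itemize}
\item Height-$n$ primitive roots of unity are maps out of $\S_{K(n)}[B^nC_{p^k}]$, equivalently classes in $\pi_n\map_\Z(\Z/p^k,\G_mE_n(L))$. They are not points of $\Map_{\calg}(\S[\Z/p^k],E_n(L))$, as your notation suggests.
\item Your $\mathrm{Ext}$-vanishing remark gives diagonality only in the homotopy category, not a coherently $\G_n$-equivariant splitting. That is enough for the statement, and the paper does not address coherence either.
\end{itemize}
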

\begin{proof}
 The first part follows immediately from the fact that the summand $L^\times$ maps injectively to the Teichm\"{u}ller lifts in $W(L)^\times \subset\pi_0E_n(L)^\times$. The second fact is proved as follows. First, consider the ring $E(L)^{hH}$ where $H=\mathrm{ker}\chi$. It is proved in  \cite[Theorem 5.8]{CSY}
 that this agrees with the maximal cyclotomic extension $\S_{K(n)}[\omega_{p^{\infty}}^{(n)}]$ of the $K(n)$-local sphere\footnote{Indeed, this is why $\chi$ contains ``cyclotomic'' in its name.}. This in particular is a summand of the $K(n)$-local group algebra $\S_{K(n)}[B^nC_{p^\infty}]$ and corepresents primitive height $n$ roots of unity. Thus, by \Cref{thm:strict_units_of_algebraically_closed_field} there is a map $\S_{K(n)}[B^nC_{p^\infty}]\to E_n(L)$, adjoint to the inclusion $\Z_p[n+1]\to\G_mE_n(L)$, and furthermore (by the second part of the theorem) it factors through the cyclotomic extension, leading to a diagram
    \[\S_{K(n)}[B^nC_{p^\infty}]\to \S_{K(n)}[\omega_{p^{\infty}}^{(n)}]\simeq E(L)^{hH}\to E(L) \]
    where the canonical $\Z_p[n+1]\subset \G_m\S_{K(n)}[B^nC_{p^\infty}]$ is sent to the $\Z_p[n+1]$ in $\G_mE(L)$. Since $H$ clearly acts trivially on $\G_m$ of the middle factor, it must act trivially on $\G_m$ of the target.
\end{proof}

The restriction of $\chi$ to the subgroup $\mathbb{S}_n$ is already identified in \cite[Theorem 5.8]{CSY} with the determinant on the Dieudonn\'e module. In particular, by \cref{prop:determinant_of_automorphisms}, the automorphism groups described in \cref{prop:automorphism_of_elliptic_curves} have a trivial image under the cyclotomic character $\chi$.

Next, observe that $F$ defines a section of $ \G_n\to\mathrm{Gal}(L/K)$. In order to take fixed points with respect to subgroups of $\mathrm{Gal}(L/K)$, we will need to identify the composite of this section with the cyclotomic character.

\cite[Proposition 5.9]{CSY} provides the necessary description of various Galois actions by virtue of the construction of alternating powers of $p$-divisible groups (cf. \cite{Hedayatzadeh2010exterior} and \cite{lurie2013ambidexterity}). 
For a finite flat group scheme $G$ and every positive integer $d$, there is a finite flat group scheme $\mathrm{Alt}^{(d)}_G$ representing the alternating powers of $G$. In the case of a formal group $\Gamma$ over $\F_{p^m}$, we will use the same notation $\mathrm{Alt}^{(d)}_\Gamma$ to denote the $p$-divisible group $\operatorname{colim}\mathrm{Alt}^{(d)}_{\Gamma[p^n]}$, which is of height $\binom{n}{d}$ and dimension $\binom{n-1}{d}$ if $\Gamma$ is of height $n$. In particular, the base change $\overline{\Gamma}:=\Gamma \times_{\F_{p^m}} \Fpb$, satisfies the property that $\mathrm{Alt}^{(n)}_{\overline{\Gamma}}$ is \'etale of height 1, so it is isomorphic to $\underline{\mathbb{Q}_p/\mathbb{Z}_p}$ with the automorphism group $\Z^\times_p$. Consequently, $\mathrm{Alt}^{(n)}_\Gamma$ determines a class in $H^1_c(\mathrm{Gal}(\Fpb/\F_p),\mathrm{Aut}(\underline{\mathbb{Q}_p/\mathbb{Z}_p}))$.

\begin{prop}[{cf. \cite[Proposition 5.9]{CSY}}]
    Fix a formal group $\Gamma$ of height $n$ over $\F_{p^m}$, which induces a section $\sigma:\hat{\Z} \to \G_n$. Then the $p$-divisible group $\mathrm{Alt}^{(n)}_\Gamma$ can be identified with the restriction character $\hat{\Z}\xrightarrow{\sigma} \G_n\xrightarrow{\chi}\Z^\times_p$ under the isomorphism
    \[ H^1_c(\mathrm{Gal}(\Fpb/\F_{p^m}),\mathrm{Aut}(\underline{\mathbb{Q}_p/\mathbb{Z}_p}))\simeq \mathrm{Hom}_c(\mathrm{Gal}(\Fpb/\F_{p^m}),\Z^\times_p).\]
\end{prop}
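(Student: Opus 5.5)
The plan is to reduce the statement to the corresponding fact over the Lubin--Tate ring, using the Galois-equivariance built into the cyclotomic character. Write $\Gamma$ for the formal group over $\F_{p^m}$ and $\overline{\Gamma}$ for its base change to $\Fpb$. By \cite[Theorem 5.8]{CSY}, $\chi$ is the character through which $\G_n$ acts on the height one $p$-divisible group $\mathrm{Alt}^{(n)}$ of the universal deformation. Concretely, $\G_n$ acts on $\S_{K(n)}[\omega^{(n)}_{p^\infty}] \simeq E(L)^{h\ker\chi}$, and the primitive roots of unity in $E_n(L)$ are identified, via \cref{thm:strict_units_of_algebraically_closed_field}, with the Tate module of $\mathrm{Alt}^{(n)}$ of the universal deformation over $\pi_0E_n(L)$.

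First, identify $\Z_p[n+1]\subset\G_mE_n(\Fpb)$ with $T_p\mathrm{Alt}^{(n)}_{\mathbb{G}}[n+1]$, where $\mathbb{G}$ is the universal deformation. I would use the Lurie/Hopkins--Lurie description of $\mu_{p^k}^{(n)}$ as maps out of $\mathrm{Alt}^{(n)}$. This identification is functorial in the pair (formal group, ring), so it is equivariant for the full $\G_n$-action. Because $\mathrm{Alt}^{(n)}_{\mathbb{G}}$ is \'etale of height one over the complete local ring $\pi_0E_n(\Fpb)$, its Tate module is determined by its special fiber $\mathrm{Alt}^{(n)}_{\overline{\Gamma}}\cong\underline{\Q_p/\Z_p}$. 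Hence $\chi(g)$ equals the automorphism of $\underline{\Q_p/\Z_p}$ induced by $g$ on the special fiber.

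Second, restrict along the section $\sigma$. For $\phi$ in $\mathrm{Gal}(\Fpb/\F_{p^m})$, the element $\sigma(\phi)$ acts on $E_n(\Fpb)$ through $\phi$ on coefficients. This is legitimate because $\Gamma$ is defined over $\F_{p^m}$, so $\phi^*\overline{\Gamma}=\overline{\Gamma}$ canonically. The induced action on $\mathrm{Alt}^{(n)}_{\overline{\Gamma}}=\mathrm{Alt}^{(n)}_\Gamma\times\Fpb$ is therefore exactly the Galois descent datum defining $\mathrm{Alt}^{(n)}_\Gamma$. After choosing a trivialization $\mathrm{Alt}^{(n)}_{\overline\Gamma}\cong\underline{\Q_p/\Z_p}$, this descent datum is a 1-cocycle. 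Since $\mathrm{Aut}(\underline{\Q_p/\Z_p})=\Z_p^\times$ carries the trivial Galois action, the cocycle is a continuous homomorphism $\mathrm{Gal}\to\Z_p^\times$, and this gives the stated isomorphism $H^1_c\simeq\mathrm{Hom}_c$. Comparing with the first step identifies this homomorphism with $\chi\circ\sigma$.

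The main obstacle is the first step: making the identification between the abstract roots-of-unity summand and $T_p\mathrm{Alt}^{(n)}$ of the universal deformation genuinely $\G_n$-equivariant, including on the Galois part rather than only on $\S_n$. I would first try to deduce it from the functorial construction in \cite{CSY}, where $\chi$ is defined via the action on $\omega^{(n)}_{p^\infty}$, together with the rigidity of \'etale $p$-divisible groups over Henselian bases. Sign conventions, such as inverse versus direct action in the cocycle, would then be checked on the Lubin--Tate example at $n=1$, where $\chi$ is the usual cyclotomic character.
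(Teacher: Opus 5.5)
The paper gives no argument for this proposition. It is imported from \cite[Proposition 5.9]{CSY} and used only, through \cref{prop_normlizable}, to obtain \cref{cor_gal_action}. Your proposal is therefore a reconstruction of the cited result rather than an alternative to a proof in the paper, and its skeleton is sound. It has three steps: a $\G_n$-equivariant identification of the height $n$ roots of unity of $E_n(\Fpb)$ with $\mathrm{Alt}^{(n)}$ of the universal deformation; rigidity of \'etale $p$-divisible groups over the complete local ring $\pi_0E_n(\Fpb)$; and Galois descent over $\F_{p^m}$, where $H^1_c=\mathrm{Hom}_c$ because $\Z_p^\times$ carries the trivial action.

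There are two corrections.

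First, the step you call the main obstacle is the entire content of the proposition. It is settled by the naturality you already invoke, not by \cite[Theorem 5.8]{CSY}. The Hopkins--Lurie description of $\mu^{(n)}_{p^k}(E)$ in terms of $\mathrm{Alt}^{(n)}_{\mathbb{G}[p^k]}$ over $\pi_0E$ is extracted from $\pi_0L_{K(n)}(E\otimes\Sigma^\infty_+B^nC_{p^k})$ and from the Quillen formal group $\mathrm{Spf}\,E^0(\mathbb{CP}^\infty)$. Both are functorial for arbitrary maps of $K(n)$-local $\E_\infty$-rings, so the identification is equivariant for every element of $\G_n$, including the semilinear automorphisms $\sigma(\phi)$. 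Since $\chi$ is defined through the action on primitive height $n$ roots of unity, as you note, nothing about $E(L)^{h\ker\chi}$ is needed.

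Second, your citations overreach. As quoted in this paper, \cite[Theorem 5.8]{CSY} only identifies $E(L)^{h\ker\chi}$ with the cyclotomic extension and $\chi|_{\mathbb{S}_n}$ with the determinant. If it already described $\chi$ on all of $\G_n$ via $\mathrm{Alt}^{(n)}$, the proposition would be nearly immediate. Likewise, \cref{thm:strict_units_of_algebraically_closed_field} gives only the abstract splitting $\Z_p[n+1]\oplus L^\times$ and says nothing about $\mathrm{Alt}^{(n)}$.

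What your route buys over the bare citation is transparency. It shows the statement is formal once Hopkins--Lurie naturality is granted. It also shows that the direct-versus-inverse ambiguity you worry about is harmless here, since the paper only uses that $\chi\circ\sigma$ is trivial when $\mathrm{Alt}^{(n)}_\Gamma\simeq\underline{\Q_p/\Z_p}$.
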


\begin{defn}[{cf. \cite[Definition 5.3.1]{lurie2013ambidexterity}}]\label[defn]{defn_normalizable}
    Let $\Gamma$ be a formal group of height $n$ over the finite field $\F_{p^m}$. Say $\Gamma$ is normalizable if there is an isomorphism of $p$-divisible groups over $\F_{p^m}$
    \[\mathrm{Alt}^{(n)}_\Gamma\simeq \underline{\Q_p/\Z_p}.\]
\end{defn}

For instance, the formal completion of supersingular elliptic curves gives rise to a large family of normalizable formal groups.

\begin{prop}\label[prop]{prop_normlizable}
    Let $E$ be a supersingular elliptic curve over a finite field $\F_{p^m}$. The alternating power $\mathrm{Alt}^{(2)}_{\hat{E}}$ is isomorphic to $\underline{\mathbb{Q}_p/\mathbb{Z}_p}$ witnessed by Weil pairing. In particular, the formal group $\hat{E}$ is normalizable, and in $H^1_c(\mathrm{Gal}(\Fpb/\F_{p^m}),\mathrm{Aut}(\underline{\mathbb{Q}_p/\mathbb{Z}_p}))$, the class  classifying $\mathrm{Alt}^{(2)}_{\hat{E}}$ is zero.
\end{prop}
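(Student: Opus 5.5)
The plan is to build the isomorphism $\mathrm{Alt}^{(2)}_{\hat{E}}\simeq\underline{\Q_p/\Z_p}$ over $\F_{p^m}$ directly from the Weil pairing. First, since $E$ is supersingular, \cref{prop:supersingular_elliptic_curves} tells us that $E[p^k]$ is connected for every $k$. Consequently $E[p^\infty]=\hat{E}[p^\infty]$, the $p$-divisible group of the height $2$ formal group $\hat{E}$. The $p^k$-Weil pairing
\[e_{p^k}:E[p^k]\times E[p^k]\to \mu_{p^k}\]
is bilinear, alternating, nondegenerate and defined over $\F_{p^m}$. It is also compatible in $k$, in the sense that $e_{p^{k}}(px,py)=e_{p^{k-1}}(px,py)^{p}$ under the transition maps. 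By the universal property of the alternating power $\mathrm{Alt}^{(2)}$ of a finite flat group scheme, $e_{p^k}$ factors through a map
\[\mathrm{Alt}^{(2)}_{\hat{E}[p^k]}\to \mu_{p^k}.\]
Passing to the colimit gives a map of $p$-divisible groups $\mathrm{Alt}^{(2)}_{\hat E}\to \mu_{p^\infty}$ over $\F_{p^m}$.

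Next, we show this map is an isomorphism. Both sides have height $1$: the source has height $\binom{2}{2}=1$ and dimension $\binom{1}{2}=0$ by the facts recalled above. Hence a morphism between them is an isomorphism as soon as it is nonzero on $p$-torsion, or equivalently an isomorphism after base change to $\Fpb$. Over $\Fpb$, nondegeneracy of the Weil pairing shows that the map is surjective onto $\mu_p$ at level $1$. Since the source has order $p$ at level $1$, we get an isomorphism.

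The point requiring care is that over a field of characteristic $p$ the group $\mu_{p^\infty}$ is connected and not étale, whereas $\mathrm{Alt}^{(2)}$ is étale of dimension $0$. So the target of the pairing must be read correctly. I would argue using Cartier duality, in line with the proof of \cref{prop:determinant_of_automorphisms}. The Weil pairing identifies $E[p^\infty]$ with its Cartier dual, and the relevant comparison takes place on Dieudonné modules. The Dieudonné module of $\mathrm{Alt}^{(2)}_{\hat E}$ is $\wedge^2_{\mathbb{W}(k)}\mathrm{DM}(E[p^\infty])$, following Hedayatzadeh and Lurie. The skew form $e(-,-)$ constructed in that proof gives a Frobenius-equivariant trivialization of this rank-one module, up to the appropriate Tate twist. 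This identifies it with the Dieudonné module of $\underline{\Q_p/\Z_p}$ (under the covariant convention in which étale height-one corresponds to $F$ invertible). I expect this step to be the main obstacle: one has to check that the Tate twist coming from $\mu_{p^\infty}$ is exactly absorbed by the duality convention, and that the isomorphism descends to $\F_{p^m}$.

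Finally, descent to $\F_{p^m}$ is automatic, because the Weil pairing and all the constructions above are Galois-equivariant. In particular, an automorphism $\varphi$ acts on $\wedge^2$ through $\det(\varphi)=\deg(\varphi)$ by \cref{prop:determinant_of_automorphisms}. For the Frobenius section this gives compatibility with the Galois action on the target, so the $\mathrm{Gal}(\Fpb/\F_{p^m})$-action on $\mathrm{Alt}^{(2)}_{\overline{\hat E}}\simeq\underline{\Q_p/\Z_p}$ is trivial. Hence the corresponding cocycle in $H^1_c(\mathrm{Gal}(\Fpb/\F_{p^m}),\Z_p^\times)$ vanishes. By \cref{defn_normalizable}, this is exactly the statement that $\hat E$ is normalizable.
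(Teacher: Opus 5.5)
\textbf{There is a genuine gap: your second paragraph has the variance of $\mathrm{Alt}^{(2)}$ backwards.} The statement uses the convention in which $\mathrm{Alt}^{(2)}$ has height $\binom{2}{2}=1$ and dimension $\binom{1}{2}=0$. In that convention, $\mathrm{Alt}^{(2)}_G$ is the group scheme \emph{representing} alternating bilinear maps $G\times G\to\G_m$. It is not the universal target of such maps; that is Hedayatzadeh's exterior power $\Lambda^2 G$, which is Cartier dual to it and, for $G=E[p^\infty]$, is connected of dimension $1$. So the Weil pairing does not give a map $\mathrm{Alt}^{(2)}_{\hat E[p^k]}\to\mu_{p^k}$. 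Either reading of your paragraph fails. If the source is étale, as you quote, then a homomorphism from it to the infinitesimal group $\mu_{p^k}$ over a perfect field of characteristic $p$ is zero. If instead the source is defined by your universal property, it is $\Lambda^2$, and your height/dimension count is for the wrong object. You notice this tension in your third paragraph. However, you then replace the argument with a Dieudonné-module computation that you leave unverified: identifying $\mathrm{DM}(\mathrm{Alt}^{(2)})$ with $\wedge^2\mathrm{DM}$, absorbing a Tate twist, and descending. As written, no isomorphism is actually constructed.

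\textbf{The missing idea is to use the Weil pairing as a \emph{point} of $\mathrm{Alt}^{(2)}$.} This is what the paper does. The pairing $e_{p^k}$ is an alternating pairing defined over $\F_{p^m}$, so it gives a homomorphism $\underline{\Z/p^k}\to\mathrm{Alt}^{(2)}_{E[p^k]}$ sending $1\mapsto e_{p^k}$, compatibly in $k$. Over $\Fpb$ the target is the constant group $\underline{\Z/p^k}$. Nondegeneracy says $e_{p^k}$ has exact order $p^k$, so the map is an injection between groups of order $p^k$, hence an isomorphism. Faithfully flat descent then gives the isomorphism over $\F_{p^m}$. Equivalently, if you keep your $\Lambda^2$: the Weil pairing gives $\Lambda^2E[p^\infty]\simeq\mu_{p^\infty}$, and Cartier duality turns this into $\mathrm{Alt}^{(2)}_{\hat E}\simeq\underline{\Q_p/\Z_p}$. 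No Dieudonné theory is needed.

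\textbf{This also repairs your last paragraph.} The class vanishes because Galois fixes the generator $e_{p^k}$ of $\mathrm{Alt}^{(2)}_{E[p^k]}(\Fpb)\cong\Z/p^k$. Invoking $\det=\deg$ for a ``Frobenius section'' does not do this job directly. That formula concerns endomorphisms of $E$, whereas the Galois descent datum is semilinear. To make it work you would have to pass through the $p^m$-Frobenius isogeny, which has degree $p^m$ rather than $1$, and through the classification of rank-one Dieudonné modules. That is again the step you left unfinished.
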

\begin{proof}
    We need to construct a compatible family of group homomorphisms \[\underline{\mathbb{Z}/p^r}\to \mathrm{Alt}^{(2)}_{E[p^r]},\] and show that it is an isomorphism. By definition, the data of this map is the same as the alternating bilinear pairing $E[p^r]\times E[p^r]\to\mu_{p^r}\subset \G_m$. The Weil pairing provides reasonable candidates, as it is well known to be compatible with multiplication by $p$ and with restriction maps. Moreover, after base change to $\Fpb$, the sheaf $\mathrm{Alt}^{(2)}_{E[p^r]}$ becomes the constant group $\underline{\Z/p^r}$. The nondegeneracy of Weil pairing implies that the resulting maps over $\Fpb$ are injective endomorphisms of $\underline{\mathbb{Z}/p^r}$ over $\Fpb$, hence automatically isomorphisms.
    Therefore, by faithful descent from $\Fpb$ to $\F_{p^m}$, the original map $\underline{\mathbb{Z}/p^r}\to \mathrm{Alt}^{(2)}_{E[p^r]}$ is also an isomorphism.
\end{proof}
\begin{cor}\label[cor]{cor_gal_action}
    Let $E_2(\Fpb)$ be the Morava $E$-theory with FGL given by the formal completion of a supersingular elliptic curve over $\F_{p^m}$. The action of $\mathrm{Gal}(\Fpb/\F_{p^m})$ on $\Z_p[3]\subset \G_mE_2(\Fpb)$ is trivial.
\end{cor}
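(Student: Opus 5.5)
The plan is to combine \cref{cor_action} with the preceding proposition (\cite[Proposition 5.9]{CSY}) and \cref{prop_normlizable}. By \cref{cor_action}, any element $g\in\G_2$ acts on the summand $\Z_p[3]\subset\G_mE_2(\Fpb)$ through multiplication by $\chi(g)\in\Z_p^\times$. So the action of $\mathrm{Gal}(\Fpb/\F_{p^m})$ is given by the composite
\[\hat{\Z}\simeq\mathrm{Gal}(\Fpb/\F_{p^m})\xrightarrow{\sigma}\G_2\xrightarrow{\chi}\Z_p^\times ,\]
where $\sigma$ is the section determined by the formal group law $\hat{E}$, which is defined over $\F_{p^m}$. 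It therefore suffices to show that this composite character is trivial.

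For this step, the proposition cited from \cite[Proposition 5.9]{CSY} identifies $\chi\circ\sigma$ with the class of the height one $p$-divisible group $\mathrm{Alt}^{(2)}_{\hat{E}}$. This class lies in
\[H^1_c(\mathrm{Gal}(\Fpb/\F_{p^m}),\mathrm{Aut}(\underline{\Q_p/\Z_p}))\simeq\mathrm{Hom}_c(\mathrm{Gal}(\Fpb/\F_{p^m}),\Z_p^\times).\]
\cref{prop_normlizable} states that this class is zero, because the Weil pairing gives an isomorphism $\mathrm{Alt}^{(2)}_{\hat{E}}\simeq\underline{\Q_p/\Z_p}$ defined over $\F_{p^m}$. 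Hence $\chi\circ\sigma$ is the trivial homomorphism, and the Galois group acts trivially on $\Z_p[3]$.

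The one point needing care is that the Galois action in question is the one coming from the section $\sigma$ fixed by the chosen formal group law, and not some other lift. I will also check that \cref{cor_action}, which is phrased for $\G_n$ acting via $\E_\infty$-automorphisms, applies to the elements $\sigma(g)$. This holds because $\sigma(g)\in\G_2$ by construction. Once this bookkeeping is settled, the argument is formal.

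It may also be worth remarking that the $L^\times$ summand is not acted on trivially. There the Galois group acts through its natural action on $\Fpb^\times$, as stated in \cref{cor_action}. Only the $\Z_p[3]$ summand carries a trivial action.
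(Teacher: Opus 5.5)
Your argument is correct and is the reasoning the paper intends. By \cref{cor_action} the action on $\Z_p[3]$ is through $\chi\circ\sigma$, which \cite[Proposition 5.9]{CSY} identifies with the class of $\mathrm{Alt}^{(2)}_{\hat{E}}$, and \cref{prop_normlizable} shows that class vanishes; the paper gives no separate proof and treats the corollary as immediate from these two results.
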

Finally, we have the following proposition.

\begin{prop}[{cf. \cite[Section 1.6]{BehrensHandbook}}]
\label[prop]{K(2)_TMF_formula}
We have an equivalence of $\mathbb{E}_\infty$-rings
$L_{K(2)}\TMF\simeq\left(\prod_{i\in S_p}E_2(\Fpb)^{hG_i}\right)^{h\hat{\Z}},$ where $\hat{\Z}$ acts on $E_2(\Fpb)$ via the section $\sigma_i:\hat{\Z}\to\G_n$ determined by the formal group law associated to the elliptic curve indexed by $i$, and on $S_p$ by its natural permutation action.
\end{prop}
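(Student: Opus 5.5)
The plan is to deduce the statement from the Goerss--Hopkins--Miller description of $\TMF$ as global sections of $\mathcal{O}^{top}$, combined with the chromatic formal neighborhood of the supersingular locus.

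First, since $\TMF$ is the global sections of $\mathcal{O}^{top}$ on the étale site of $\mathcal{M}_{\mathrm{ell}}$ and $L_{K(2)}$ only sees the formal neighborhood of the supersingular locus, I will identify
\[L_{K(2)}\TMF\simeq \Gamma\big(\mathcal{M}_{\mathrm{ell}}^{ss,\wedge},\mathcal{O}^{top}\big).\]
This formal neighborhood, base changed to $\Fpb$, is a disjoint union over $S_p$ of the stacks $\mathrm{Spf}\,W(\Fpb)[[u_1]]/\mathrm{Aut}(E_i)$. I will use Serre--Tate theory together with Lubin--Tate theory to say that the formal deformations of $E_i$ are the same as the deformations of $\hat{E}_i$, which is a height 2 formal group by \cref{prop:supersingular_elliptic_curves}. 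Goerss--Hopkins--Miller uniqueness then identifies the value of $\mathcal{O}^{top}$ on each piece with $E_2(\Fpb)$ carrying the $G_i=\mathrm{Aut}(E_i)$-action through $\mathbb{S}_2$. Taking sections over each piece gives $E_2(\Fpb)^{hG_i}$. I will pick a model of each $E_i$ over a finite field; this is possible since $j(E_i)\in\F_{p^2}$.

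Second, I will descend from $\Fpb$ to $\F_p$. The supersingular locus is defined over $\F_p$, and the Frobenius action on $S_p$ is the permutation described in \cref{sec2}. The covering $\Fpb/\F_p$ is pro-Galois with group $\hat{\Z}$, so Galois descent for $K(2)$-local sections gives the $h\hat{\Z}$ outside. Geometric Frobenius sends the summand indexed by $E_i$ to the one indexed by $E_i^{(p)}$. The twist on each summand is by the section $\sigma_i:\hat{\Z}\to\G_2$ determined by the formal group law of the chosen $\F_{p^m}$-model. As a result $\hat{\Z}$ acts on $\prod_{i\in S_p}E_2(\Fpb)^{hG_i}$ by permuting factors and through $\sigma_i$ on them. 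I will also need to handle the continuity of the profinite homotopy fixed points. Here I will invoke Devinatz--Hopkins style continuous fixed points, or equivalently work with finite levels $\F_{p^{m}}$ and pass to the $K(2)$-local colimit.

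The main obstacle is making the second step honest. The $\hat{\Z}$-action has to be an action by $\E_\infty$-maps compatible with the $G_i$-actions, and the section $\sigma_i$ depends on the chosen model over $\F_{p^m}$. Only the stabilizer $m\hat{\Z}$ of $i$ acts on the $i$-th factor through $\sigma_i$. So the $h\hat{\Z}$ of the product is really a coinduced object:
\[\prod_{\text{orbits}}\big(E_2(\Fpb)^{hG_i}\big)^{h(m_i\hat{\Z})}.\]
For the purposes of the paper it is enough to state and use this orbit form, and I would defer the detailed comparison to \cite[Section 1.6]{BehrensHandbook}.
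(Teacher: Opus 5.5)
Your proposal is correct and follows essentially the same route as the paper: identify $L_{K(2)}\TMF$ with sections of $\mathcal{O}^{\mathrm{top}}$ over the formal neighborhood of the supersingular locus (\cite[Proposition 1.6.8]{BehrensHandbook}), use Serre--Tate to see its base change as a disjoint union over $S_p$ of $G_i$-Galois quotients of $\mathrm{Spf}$ of the Lubin--Tate ring, with sections $E_2(\Fpb)$, and then descend along the $\hat{\Z}$-Galois cover. Your orbit form $\prod_{\mathrm{orbits}}\big(E_2(\Fpb)^{hG_i}\big)^{h(m_i\hat{\Z})}$ is a correct sharpening of the paper's looser phrasing, and it is exactly how the paper uses the formula at $p\geq 5$, where orbits of size $2$ contribute $\F_{p^2}^\times$.
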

\begin{proof}
     This is contained in \cite[Section 1.6]{BehrensHandbook}, though the final formula is only quoted there for $p=2,3$. By \cite[Proposition 1.6.8]{BehrensHandbook}, $L_{K(2)}\TMF$ is the global sections of the restriction of $\mathcal{O}^\mathrm{top}_{\mathcal{M}_\mathrm{ell}}$ to $\widehat{\mathcal{M}^\mathrm{ss}_\mathrm{ell}}$, the formal neighborhood of the moduli stack of supersingular elliptic curves $\mathcal{M}^\mathrm{ss}_\mathrm{ell}\hookrightarrow\mathcal{M}_\mathrm{ell}$.
     After base change to $\Fpb$, $\mathcal{M}^\mathrm{ss}_\mathrm{ell}$ is a disjoint union over a finite set $S_p$ (the set of isomorphism classes of supersingular elliptic curves over $\Fpb$) of groupoids $\pt/G_i$ where $G_i$ is the automorphism group of some elliptic curve representative $E_i$ of $i\in S_p$. Thus by the Serre--Tate theorem, we see that
     \[L_{K(2)}\TMF\to\prod_{S_p}E_2(\Fpb)\]
     is a $\hat{\Z}$-Galois cover of a disjoint union of $G_i$-Galois covers, giving the desired formula.
\end{proof}

\subsection{$p=2$}
\begin{prop}
   There is an equivalence of connective $\Z$-modules
    \begin{equation*}
        \G_mL_{K(2)}\TMF=\Z_2[2]\oplus\Z_2[3].
    \end{equation*}
\end{prop}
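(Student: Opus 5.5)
At $p=2$ the set $S_2$ is a single point (\cref{example:supersingular_elliptic_curves_at_p=2}), with representative $E:y^2+y=x^3$ defined over $\F_2$, and $G=\mathrm{Aut}(E)\cong \Z/3\ltimes Q_8$ (over $\F_4$) by \cref{prop:automorphism_of_elliptic_curves}. So \cref{K(2)_TMF_formula} specializes to $L_{K(2)}\TMF\simeq E_2(\overline{\F}_2)^{h(G\rtimes\hat{\Z})}$. Here $\hat{\Z}$ acts through the section $\sigma$ determined by $\hat E$ over $\F_2$. Since $\G_m=\Map_{\calg}(\S[\Z],-)$ commutes with limits, we get
\[\G_mL_{K(2)}\TMF\simeq \tau_{\ge 0}\big(\G_mE_2(\overline{\F}_2)\big)^{h(G\rtimes\hat{\Z})}.\]
The connective cover must be taken at the end. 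Everything is $K(2)$-local, so the homotopy fixed points are continuous ones. By \cref{thm:strict_units_of_algebraically_closed_field} the coefficient $\Z$-module is $\Z_2[3]\oplus\overline{\F}_2^\times$, and the plan is to compute the fixed points of each summand separately.

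For the summand $\overline{\F}_2^\times$ in degree $0$, \cref{cor_action} says $G$ acts trivially and $\hat{Z}$ acts through Galois. The group $\overline{\F}_2^\times$ is a torsion group of order prime to $2$. The profinite group $\hat{\Z}$ has cohomological dimension $1$, with $H^0=\F_2^\times=1$. Its $H^1$ is the coinvariants of the Frobenius $x\mapsto x^2$ acting on $\overline{\F}_2^\times$, and these vanish because $x\mapsto x^2/x=x$ is surjective. The $G$-cohomology of a trivial module has a $3$-part and a $2$-part, since $|G|=24$. The module $\overline{\F}_2^\times$ is uniquely $2$-divisible, so the $2$-part dies. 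The $3$-part of $H^i(G;\overline{\F}_2^\times)$ lives in positive degrees, and in the homotopy fixed point spectrum it would only contribute to negative homotopy, which the connective cover discards. I expect this summand to contribute nothing to $\tau_{\ge0}$, but this needs care.

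For the summand $\Z_2[3]$, the action is again trivial. On $G$ this follows from \cref{prop:determinant_of_automorphisms} and the cyclotomic character. On $\hat{\Z}$ it follows from \cref{cor_gal_action}, since $E$ is defined over $\F_2$ (\cref{prop_normlizable}). The homotopy groups in degrees $\ge0$ are therefore $\pi_{3-s}$ for $s=0,1,2,3$, computed by continuous cohomology $H^s_c(G\times\hat{\Z};\Z_2)$. The semidirect product acts on a trivial module, so only the abelianization-type data matter.
\begin{itemize}
\item $H^0=\Z_2$, giving $\Z_2$ in degree $3$.
\item $H^1=\mathrm{Hom}_c(G\rtimes\hat{\Z},\Z_2)=\mathrm{Hom}(\hat{\Z},\Z_2)=\Z_2$, since $G$ is finite. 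This gives $\Z_2$ in degree $2$.
\item $H^2$ and $H^3$ could contribute torsion in degrees $1$ and $0$. The Hochschild--Serre spectral sequence gives $H^{a}(\hat{\Z};H^b(G;\Z_2))$ with $a\le1$. Here $H^1(G;\Z_2)=0$, $H^2(G;\Z_2)=G^{ab,\vee}\otimes\Z_2$ with $G^{ab}=\Z/3$, whose $2$-completion vanishes, and $H^3(G;\Z_2)=0$ because $H^3(Q_8;\Z)=0$.
\end{itemize}
So $H^2=H^3=0$, leaving exactly $\Z_2[2]\oplus\Z_2[3]$ after the connective cover. The main obstacle is this finite group cohomology bookkeeping, together with the possibly nontrivial Frobenius action on $H^*(G;\Z_2)$ and on $G$. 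I would verify it using the periodic cohomology of $Q_8$ (period $4$, with $H^4=\Z/8$).

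The splitting as a direct sum follows because $\Z$-module spectra with homotopy concentrated in two adjacent degrees are classified by a $k$-invariant in $\mathrm{Ext}^2_\Z(\Z_2,\Z_2)=0$, so the extension splits. Alternatively, the decomposition comes directly from the splitting of the coefficients and the computation above. A possible degree-$0$ contribution from $\overline{\F}_2^\times$ via $H^0(\hat{\Z})=\F_2^\times$ is trivial, which confirms that nothing appears in degree $0$.
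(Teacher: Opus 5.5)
Your proposal is correct and follows essentially the same route as the paper: descent from $E_2(\overline{\F}_2)$ along $G_{24}$ and $\hat{\Z}$, triviality of the action on $\Z_2[3]$ via \cref{prop:determinant_of_automorphisms} and \cref{cor_gal_action}, and a low-degree group cohomology computation. The differences are only in presentation. You use a single Hochschild--Serre spectral sequence, where the paper takes $G_{24}$-fixed points first and then uses $B\hat{\Z}\simeq S^1$. You also spell out what the paper calls a ``group cohomology computation'': the vanishing of $H^1$, $H^2$, $H^3(G_{24};\Z_2)$, and the fact that $\overline{\F}_2^\times$ contributes nothing in nonnegative degrees after Frobenius fixed points.
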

\begin{proof}
     \cref{example:supersingular_elliptic_curves_at_p=2} and \cref{prop:automorphism_of_elliptic_curves} imply that there is only one supersingular elliptic curve, which is already defined over $\F_2$, and its $\overline{\F}_2$-automorphism group $G=G_{24}=\Z/3\ltimes Q_8$. Thus from \Cref{K(2)_TMF_formula}, we see that
\[L_{K(2)}\TMF\simeq\left(E_2(\overline{\F}_2)^{hG_{24}}\right)^{h\hat{\Z}}.\]
By \Cref{prop:determinant_of_automorphisms} and \Cref{cor_action}, the action of $G_{24}$ on $\G_mE_2(\Fpb)=\Z_2[3]\oplus\overline{\F}_2^\times$ is trivial. So we get
\[\G_mE_2(\Fpb)^{hG_{24}}\simeq \Z_2[3]^{BG_{24}}\simeq \Z_2[3]\]
by group cohomology computation.
It follows from \Cref{cor_gal_action} that the action of $\hat{\Z}$ is also trivial.
Using the 2-adic equivalences $B\hat{\Z}\simeq B\Z_2\simeq S^1$, we get
\[\G_mL_{K(2)}\TMF=\left(\G_mE_2(\Fpb)^{BG_{24}}\right)^{B\hat{\Z}}=\Z_2[2]\oplus \Z_2[3]\]

as desired.
\end{proof}
\subsection{$p=3$}
By \Cref{prop:automorphism_of_elliptic_curves} there is only one supersingular elliptic curve with stabilizer $C_4\ltimes C_3$.

\begin{prop}
    We have an equivalence between connective $\Z$-modules
    \begin{equation*}
        \G_mL_{K(2)}\TMF=\F_3^\times\oplus\Z_3[2]\oplus\Z_3[3].
    \end{equation*}
\end{prop}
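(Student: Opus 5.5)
The plan is to follow the $p=2$ argument. I would feed the descent formula of \Cref{K(2)_TMF_formula} into the description of $\G_mE_2(\overline{\F}_3)$ from \Cref{thm:strict_units_of_algebraically_closed_field}, and then compute low-degree group cohomology. By \cref{example:supersingular_elliptic_curves_at_p=3} and \cref{prop:automorphism_of_elliptic_curves}, $S_3$ has one element. It is represented by $E\colon y^2=x^3-x$, which is already defined over $\F_3$, and its automorphism group is $G=C_4\ltimes C_3$, with $C_4$ acting on $C_3$ by inversion. So
\[L_{K(2)}\TMF\simeq\left(E_2(\overline{\F}_3)^{hG}\right)^{h\hat{\Z}}.\]
Since $\G_m=\tau_{\geq0}\map_\Sp(\Z,\glone(-))$ and each ingredient commutes with limits (the connective cover being the right adjoint $\Sp\to\Sp_{\geq0}$), we get
\[\G_m(R^{hK})\simeq\tau_{\geq 0}\left((\G_mR)^{hK}\right).\]
Hence only the group cohomology $H^i$ with $i\le 3$ for the $\Z_3[3]$ summand, and with $i=0$ for the discrete summand, can survive.

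\emph{Step 1: $G$-fixed points.} By \Cref{cor_action}, $G\subset\mathbb{S}_2$ maps trivially to the Galois group, so it acts trivially on $\overline{\F}_3^\times$. On $\Z_3[3]$ it acts through $\chi$, which is trivial by \Cref{prop:determinant_of_automorphisms}, since automorphisms have degree $1$. For the discrete summand, $\tau_{\ge0}(\overline{\F}_3^\times)^{BG}=\overline{\F}_3^\times$. For $\Z_3[3]^{BG}$ I need $H^i(G;\Z_3)$ for $i\le3$; since $4$ is prime to $3$, it suffices to compute $H^i(C_3;\Z_3)^{C_4}$:
\begin{itemize}
\item $H^0=\Z_3$;
\item $H^1=H^3=0$;
\item $H^2(C_3;\Z_3)=\Z/3$, on which the generator of $C_4$ acts by $-1$, so the invariants are $0$. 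Equivalently, the $3$-part of $G^{\mathrm{ab}}=C_4$ is trivial.
\end{itemize}
The first nonvanishing class, $H^4=\Z/3$, lands in degree $-1$ and is discarded. Hence
\[\G_mE_2(\overline{\F}_3)^{hG}\simeq\Z_3[3]\oplus\overline{\F}_3^\times.\]

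\emph{Step 2: $\hat{\Z}$-fixed points.} On $\overline{\F}_3^\times$, $\hat{\Z}$ acts through Frobenius $x\mapsto x^3$, so the continuous $H^0$ is $\F_3^\times$. The $H^1$, in degree $-1$, is discarded anyway; it is $0$ in any case, since $x\mapsto x^{2}$ is surjective on $\overline{\F}_3^\times$. On $\Z_3[3]$ the action is trivial by \Cref{cor_gal_action}, because $E$ is defined over $\F_3$. One also has to check that the residual $\hat{\Z}$-action on $\Z_3[3]^{BG}\simeq\Z_3[3]$ really is the one coming from the section $\sigma$, which is the case since $\Z_3[3]^{BG}$ is just the $H^0$ part. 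Then, using the $3$-adic equivalence $B\hat{\Z}\simeq B\Z_3\simeq S^1$ on $\Z_3$-modules, we get $\Z_3[3]^{B\hat{\Z}}\simeq\Z_3[3]\oplus\Z_3[2]$. Assembling the pieces gives $\F_3^\times\oplus\Z_3[2]\oplus\Z_3[3]$.

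\emph{Main obstacle.} The subtle point is Step 1 for the discrete summand: the claim that $G$ acts trivially on the Teichmüller part. I would justify it exactly as in \Cref{cor_action}, using that $L^\times$ injects into $W(L)^\times\subset\pi_0E_2$ and that $\mathbb{S}_2$ fixes $W(L)$. One then has to make sure the $\overline{\F}_3^\times$ summand is genuinely split off $\G_mE_2(\overline{\F}_3)$ $G\rtimes\hat{\Z}$-equivariantly, so that the two summands can be treated separately. The negative-degree cohomology ($H^4(G;\Z_3)$ and $H^1(\hat{\Z};\overline{\F}_3^\times)$) is harmless once the truncation formula above is in place.
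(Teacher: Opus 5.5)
Your proposal is correct and is essentially the paper's argument: descent via \Cref{K(2)_TMF_formula} for the single curve $y^2=x^3-x$ with $G=C_4\ltimes C_3$, trivial actions on $\Z_3[3]$ from \Cref{prop:determinant_of_automorphisms} and \Cref{cor_gal_action}, Frobenius fixed points $\F_3^\times$ on the Teichm\"uller part, and the vanishing of $H^1,H^2,H^3(G;\Z_3)$. The differences are minor. You take the $G$-fixed points before the $\hat{\Z}$-fixed points, which is the order compatible with $G$ being normal in $G\rtimes\hat{\Z}$. You also write out the $C_4$-invariants computation that the paper leaves to ``group cohomology of dihedral groups''. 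Finally, the equivariant splitting you flag, which the paper also takes for granted, is automatic: $\overline{\F}_3^\times$ is uniquely $3$-divisible and $\Z_3$ is $3$-complete, so $\map_\Z(\overline{\F}_3^\times,\Z_3[4])\simeq 0$, and every equivariant extension between the two summands splits.
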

\begin{proof}
    The argument is identical to the one at $p=2$, with $G_{24}$ replaced by $C_4\ltimes C_3$. Thus the final answer reads
    \[\G_mL_{K(2)}\TMF\simeq \left(\Z_3[3]\oplus\Z_3[2]\oplus \F_3^\times\right)^{BC_4\ltimes C_3}\]
   whose connective cover is $\F_3^\times\oplus\Z_3[2]\oplus\Z_3[3]$ by group cohomology of dihedral groups as desired.
\end{proof}

\subsection{$p\geq 5$}
\begin{prop}
    Denoting $N_p$ by the number of orbits appearing in the Galois action of $S_p$ as in \cref{notation:Np}, we have an equivalence\[\G_mL_{K(2)}\TMF=\left(\Z_p[3]\oplus\Z_p[2]\right)^{\oplus N_p}\oplus (\F_p^\times)^{\oplus(2N_p-\#S_p) }\oplus(\F_{p^2}^\times)^{\oplus(\#S_p-N_p)}.\]
\end{prop}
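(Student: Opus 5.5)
The argument follows the one at $p=2,3$, with the Galois permutation of $S_p$ now doing the real work. By \Cref{K(2)_TMF_formula},
\[\G_mL_{K(2)}\TMF\simeq\Big(\prod_{i\in S_p}\G_mE_2(\Fpb)^{hG_i}\Big)^{h\hat{\Z}},\]
because $\G_m$ preserves limits (connective cover taken at the end). Group the product by $\hat{\Z}$-orbits of $S_p$. For an orbit $O$ of size $d\in\{1,2\}$ with chosen representative $i$, the induced-module (Shapiro) identification turns the $\hat{\Z}$-fixed points of $\prod_{j\in O}(-)$ into fixed points for the stabilizer $d\hat{\Z}\cong\mathrm{Gal}(\Fpb/\F_{p^d})$. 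Here $d=1$ exactly when $j(E_i)\in\F_p$, and $d=2$ otherwise. As the orbit representative I take a curve defined over $\F_{p^d}$; for $d=1$ this is a model over $\F_p$, which exists for a supersingular $j\in\F_p$. So it suffices to compute, for each orbit,
\[\big(\G_mE_2(\Fpb)^{hG_i}\big)^{h\mathrm{Gal}(\Fpb/\F_{p^d})},\qquad \G_mE_2(\Fpb)=\Z_p[3]\oplus\Fpb^\times ,\]
using \Cref{thm:strict_units_of_algebraically_closed_field}.

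The first step is the $G_i$-fixed points. By \Cref{prop:automorphism_of_elliptic_curves}, $G_i$ is $\Z/2$, $\Z/4$ or $\Z/6$, so $|G_i|$ is prime to $p\ge5$. By \Cref{prop:determinant_of_automorphisms} and \Cref{cor_action}, $G_i$ acts trivially on $\Z_p[3]$, since automorphisms have degree $1$. It also acts trivially on $\Fpb^\times$, because $G_i\subset\S_2$ maps trivially to the Galois group. On the $p$-complete summand $\Z_p[3]$ the higher cohomology of a group of order prime to $p$ vanishes, so $\Z_p[3]^{BG_i}\simeq\Z_p[3]$. On $\Fpb^\times$, viewed as a discrete module in degree $0$, the connective cover of the fixed points is just $\Fpb^\times$; the higher cohomology sits in negative degrees and is discarded. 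One point needs care here: the negative-degree pieces must not feed back into nonnegative degrees after the subsequent $\hat{\Z}$-fixed points. Since $B\hat{\Z}$ has cohomological dimension $1$ on these modules, each piece can shift up by at most one degree, which is not enough, so I would verify this directly.

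The second step is the Galois fixed points for each orbit. By \Cref{cor_gal_action}, applied to a model over $\F_{p^d}$, $\mathrm{Gal}(\Fpb/\F_{p^d})$ acts trivially on $\Z_p[3]$. Since $B\hat{\Z}\simeq S^1$ $p$-adically, this summand contributes $\Z_p[3]\oplus\Z_p[2]$. On $\Fpb^\times$ the action is the natural Galois action. Its fixed points are $\F_{p^d}^\times$ in degree $0$, and its $H^1$ is the coinvariants $\Fpb^\times/(x\mapsto x^{p^d-1})$, which vanish because $\Fpb^\times$ is divisible. Together these give $\F_{p^d}^\times$ in degree $0$.

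Finally I count orbits. There are $N_p$ orbits in total. Let $a$ be the number of size-one orbits and $b$ the number of size-two orbits; then $a+b=N_p$ and $a+2b=\#S_p$, so $a=2N_p-\#S_p$ and $b=\#S_p-N_p$. Summing the contributions gives the stated formula. I expect the main obstacle to be justifying that the Galois action on $\Z_p[3]$ is trivial for every orbit representative. This requires a model of $E_i$ over $\F_{p^d}$ whose section $\sigma_i$ is the one used in \Cref{K(2)_TMF_formula}; the degree-$0$ bookkeeping, by contrast, is routine.
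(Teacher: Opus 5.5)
Your proposal is correct and follows the paper's route. You reduce via the descent formula for $L_{K(2)}\TMF$, discard the prime-to-$p$ groups $G_i$ (which act trivially), and take $\hat{\mathbb{Z}}$-fixed points orbit by orbit; your Shapiro-lemma bookkeeping is a cleaner version of the paper's direct computation $(a,b)\mapsto(b^p,a^p)$ on size-two orbits.

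The two worries you flag are harmless, and both resolve immediately:
\begin{itemize}
\item In homotopy fixed points, $\pi_n$ receives contributions only from $\pi_{\geq n}$. So the negative-degree classes $H^{>0}(G_i;\overline{\mathbb{F}}_p^\times)$ never reach degree $\geq 0$. Your phrase about shifting ``up'' has the direction reversed, and no direct check is needed.
\item Changing the model of $E_i$ over $\mathbb{F}_{p^d}$ alters the Frobenius section only by elements of $G_i$, and these have trivial cyclotomic character. So the action on $\mathbb{Z}_p[3]$ does not depend on the model chosen.
\end{itemize}
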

\begin{proof}
    By \Cref{K(2)_TMF_formula}, we have
\[\G_mL_{K(2)}\TMF\simeq\left(\prod_{i\in S_p}\G_mE_2(\Fpb)^{hG_i}\right)^{h\hat{\Z}},\]

where $S_p$ is the collection of isomorphism classes of supersingular curves over $\Fpb$, and $G_i$ is the automorphism group of that curve. By \cref{prop:automorphism_of_elliptic_curves} $G_i$ is either $\Z/2$, $\Z/4,$ or $\Z/6$, and by \cref{prop:determinant_of_automorphisms} they act trivially. As these groups are all coprime to $p$, the formula reduces to
\[\G_mL_{K(2)}\TMF\simeq\left(\prod_{i\in S_p}\G_mE_2(\Fpb)\right)^{h\hat{\Z}}.\]
Applying the formula in \Cref{thm:strict_units_of_algebraically_closed_field} to $\G_mE_2(\Fpb)$, we need to compute the homotopy fixed points for each degree. According to \Cref{cor_gal_action}, the Galois group $\hat{\Z}$ acts on $(\Z_p)^{\oplus \#S_p}$ solely by permuting the indexing set, so we have the summand $(\Z_p[3]\oplus\Z_p[2])^{N_p}$. For $(\Fpb^\times)^{\oplus \#S_p}$, the Galois group acts both via Frobenius on each summand and on the indexing set. In particular, for an orbit of order 2, it maps a pair $(a,b)$ to $(b^p,a^p)$, whence the fixed points are of form $(a,a^p)$ with $a\in\F_{p^2}^\times$. This provides the summands $(\F_p^\times)^{\oplus(2N_p-\#S_p) }\oplus(\F_{p^2}^\times)^{\oplus(\#S_p-N_p)}$.
\end{proof}
\section{Facts about $K(1)$-local power operations}\label{sec4}
In this section we collect some facts about $K(1)$-local power operations which we will need below. The first fact is that for any $K(1)$-local $\E_\infty$-ring $R$, there exists a canonical power operation, given by a map of spaces $\theta:\Omega^\infty R\rightarrow R$, and such that the free $K(1)$-local $\E_\infty$-ring on one generator $x$ of degree $0$ has homotopy groups given by the infinite polynomial algebra
\[\pi_*\mathrm{Free}_{K(1)}(x)\simeq\pi_*\S_{K(1)}[x, \theta(x), \theta^2(x), ...].\]
It follows that the horizontal leg in the pushout diagram
\[\begin{tikzcd}
    \mathrm{Free}_{K(1)}(x)\arrow[r,"x\mapsto\theta(y)"]\arrow[d,"x\mapsto0"]&\mathrm{Free}_{K(1)}(y) \\
    \S_{K(1)}& \\
\end{tikzcd}\]
is flat, and hence this pushout presents the spherical monoid algebra $\S_\K[\mathbb{N}]$ of the natural numbers. Hence, inverting $y$ in the top right node is again a pushout diagram, this time presenting the spherical group algebra $\S_\K[\mathbb{Z}]$. Given the adjunction $\S[-]:\Sp^\cn\rightleftharpoons\calg: \glone$, we thus get a fiber sequence of spaces
\[\G_m(R)\rightarrow \mathrm{GL}_1(R)\oto{}\Omega^\infty R.\]
We call the corresponding long exact sequence of homotopy groups the $\theta$-LES, and we write $\theta_n$ for the map $\pi_n\GL_1(R)\rightarrow\pi_n\Omega^\infty R$. While $\theta_0$ coincides with the restriction of $\theta$ to the units in $\pi_0R$ by definition, to calculate the higher $\theta_n$'s, we have the following result.

\begin{lem}\label[lem]{lem_theta_n}
    Let $R$ be a $K(1)$-local $\E_\infty$-ring at the prime 2. Write $\epsilon_n\in R^n(S^n)$ for the canonical class. Observe that $R^{S^n}$ is also a $K(1)$-local $\E_\infty$-ring, and thus also has a canonical $\theta$-operation. For a class $\alpha\in R^k(S^n)$ define $\l \alpha, 1\r\in \pi_{-k}R$ and $\l \alpha, \epsilon_n\r\in \pi_{n-k}R$ by the formula
    \[\alpha = \l \alpha, 1\r1+\l \alpha, \epsilon_n\r\epsilon_n.\]
    Then for any $\xi\in\pi_nR$ we have
    \[\theta_n(\xi)=\l\theta_0(\xi\epsilon_n),\epsilon_n\r-\xi.\]
\end{lem}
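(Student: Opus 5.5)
The plan is to compute $\theta_n$ by applying the degree-zero operation $\theta$ in the $K(1)$-local $\E_\infty$-ring $R^{S^n}$, and then to expand $\theta$ of a sum with the $p=2$ Cartan formula.

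First I would identify the map $\GL_1(R)\to\Omega^\infty R$ in the $\theta$-LES. Unwinding the pushout presentation of $\S_{K(1)}[\Z]$, a map out of it to $R$ is a unit $y\in\GL_1(R)$ together with a nullhomotopy of $\theta(y)$. So the map is just $u\mapsto\theta(u)$, restricted to $\GL_1(R)\subset\Omega^\infty R$. Since $\theta$ is a natural transformation of the space-valued functor $\Omega^\infty$ on $K(1)$-local $\E_\infty$-rings, it commutes with the cotensor $R^{S^n}$. Concretely, $\Omega^\infty(R^{S^n})=\Map(S^n,\Omega^\infty R)$, and the canonical $\theta$ on $R^{S^n}$ is given pointwise by postcomposition with $\theta_R$. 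I would justify this by naturality with respect to the evaluation maps $R^{S^n}\to R$ at each point, or more cleanly by noting that $R^{S^n}$ is the limit of a constant diagram in $K(1)$-local $\E_\infty$-rings and that $\theta$ is natural in $\E_\infty$-maps.

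Second, I would translate homotopy groups into this language. Via the splitting $R^0(S^n)=\pi_0R\cdot 1\oplus\pi_nR\cdot\epsilon_n$, a class in $\pi_n(\Omega^\infty R,0)$ is the $\epsilon_n$-component of an element of $R^0(S^n)$ whose restriction to the basepoint vanishes. Similarly, a class in $\pi_n(\GL_1R,1)$ corresponds to a unit $1+\xi\epsilon_n\in R^0(S^n)$. The map $\theta_n$ on $\pi_n$ sends $1+\xi\epsilon_n$ to $\theta_0(1+\xi\epsilon_n)$, computed in $R^{S^n}$. The basepoint restriction of this element is $\theta(1)=0$, so we read off its $\epsilon_n$-component.

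Third, I would use the $p=2$ additivity formula $\theta(a+b)=\theta(a)+\theta(b)-ab$, which comes from $\psi=x^2+2\theta$ being additive, together with $\theta(1)=0$. This gives
\[\theta_0(1+\xi\epsilon_n)=\theta_0(\xi\epsilon_n)-\xi\epsilon_n.\]
Taking the $\epsilon_n$-component yields $\l\theta_0(\xi\epsilon_n),\epsilon_n\r-\xi$, as claimed.

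The main obstacle I expect is the first step: verifying that the map in the fiber sequence is exactly $u\mapsto\theta(u)$ on $\pi_n$, with the correct basepoint conventions. This needs the identification of $\pi_n$ of a mapping space via cotensors, and the compatibility of the $\theta$ of $R^{S^n}$ with that of $R$. I would handle it by naturality of $\theta$ under the $\E_\infty$-maps given by restricting along points of $S^n$, together with functoriality in $S^n$.
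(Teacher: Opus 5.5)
Your proposal is correct and is essentially the paper's own proof. Naturality of $\theta$ for the cotensor $R^{S^n}$, together with the basepoints $1\in\mathrm{GL}_1R$ and $0\in\Omega^\infty R$, gives $\theta_n(\xi)=\langle\theta_0(1+\xi\epsilon_n),\epsilon_n\rangle$, and the $p=2$ additivity identity $\theta(a+b)=\theta(a)+\theta(b)-ab$ with $\theta(1)=0$ then yields the formula.

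One small caution: cite that additivity identity as part of the $\delta$-structure on $\pi_0$ of any $K(1)$-local $\mathbb{E}_\infty$-ring, as the paper does, rather than deriving it from additivity of $\psi$. The reason is that $\pi_0R^{S^n}$ typically has $2$-torsion (e.g.\ $\eta\epsilon_1$), and there additivity of $\psi$ only determines $\theta(a+b)$ up to $2$-torsion.
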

\begin{proof}
    We have the following chain of equalities:
    \[\theta_n(\xi)=\l\theta_0(1+\xi\epsilon_n),\epsilon_n \r=\l\theta_0(1)+\theta_0(\xi\epsilon_n)-\xi\epsilon_n,\epsilon_n \r=\l\theta_0(\xi\epsilon_n),\epsilon_n\r-\xi.\]
    Here, the first equality follows by naturality of $\theta$ (the composite of $R$'s $\theta$ and a map $\S^n\to \glone R$ is equal to $R^{S^n}$'s $\theta$ on the class associated to the map) together with the fact that the basepoint of $\GL_1 R$ is 1, while that of $\Omega^\infty R$ is 0. The second equality follows from the fact that $\theta$ is a $\delta$-structure and thus satisfies the standard $\delta$-ring identities for $\delta(xy)$ and $\delta(x+y)$.
\end{proof}

Before we move to the $K(1)$-local computations, let us recall a lemma in \cite{CarmeliLuecke}, which ensures that it's reasonable to consider only the first several terms in $\theta$-LES.

\begin{lem}[{cf. \cite[Lemma 3.1.5]{CarmeliLuecke}}]\label[lem]{truncated}
    For any $T(1)$-local ring $R$, $\G_m(R)$ is 2-truncated, and $\pi_2\G_m(R)$ is torsion-free.
\end{lem}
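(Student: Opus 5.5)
The plan is to read off the homotopy groups of $\G_m(R)$ from the fiber sequence of spaces
\[\G_m(R)\to \mathrm{GL}_1(R)\xrightarrow{\theta}\Omega^\infty R\]
constructed above. (For a $T(1)$-local ring the same pushout presentation of $\S_{K(1)}[\Z]$ is available after $K(1)$-localization.) In the resulting $\theta$-LES, $\pi_n\G_m(R)$ sits in
\[0\to\mathrm{coker}(\theta_{n+1})\to\pi_n\G_m(R)\to\ker(\theta_n)\to 0 .\]
So it suffices to show two things: $\theta_n$ is an isomorphism for $n\geq 3$, and $\theta_3$ is surjective while $\theta_2$ is injective. The first gives vanishing above degree 2. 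The second gives $\pi_2\G_m(R)\cong\ker\theta_2\oplus$ (nothing from $\mathrm{coker}\,\theta_3$), hence torsion-free as a subgroup.

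To get at $\theta_n$ for $n\geq 1$, I would use \cref{lem_theta_n}. For $n\geq1$, $\pi_n\GL_1R\cong\pi_nR$, and $\theta_n(\xi)=\langle\theta_0(\xi\epsilon_n),\epsilon_n\rangle-\xi$. The class $\xi\epsilon_n$ squares to zero in $R^0(S^n)$, since $\epsilon_n^2=0$. The $\delta$-ring identity $\psi(x)=x^p+p\theta(x)$ with $\psi$ additive then gives $p\,\theta_0(\xi\epsilon_n)=\psi(\xi\epsilon_n)$. Here $\psi$ is the Adams operation on $R^{S^n}$, which acts on the suspension class $\epsilon_n$ through $R$'s $\psi$ twisted by the action on $\pi_n\S_{K(1)}$-type classes.

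Concretely, in $K(1)$-local theory $\psi$ on $\pi_{n}$ of the $n$-sphere factor is multiplication by $p^{n/2}$ in even degrees, and is $0$ / $2$-torsion–related in odd degrees. This would give
\[\theta_n(\xi)=\left(p^{n/2-1}\psi-1\right)\xi\]
for even $n$, with an analogous formula for odd $n$, where the twisted Frobenius should land in a topologically nilpotent multiple. This is exactly the formula underlying the analogous computation for $K$-theory.

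For $n\geq 3$ the operator $p^{k}\psi-1$ with $k\geq1$ is invertible on the $p$-complete group $\pi_nR$, via the geometric series. This uses that $T(1)$-local rings are $p$-complete in the derived sense, so the series converges. Hence $\theta_n$ is an isomorphism for $n\geq3$. For $n=2$ we get $\theta_2=\psi-1$ (or a unit multiple of it). Its kernel is the $\psi$-fixed submodule of $\pi_2R$, and its cokernel is not needed. For $n=3$ we need surjectivity, which is part of the invertibility claim.

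The main obstacle is making the identification of $\theta$ on $R^{S^n}$ precise: dividing $\psi(\xi\epsilon_n)$ by $p$ requires knowing $\psi(\epsilon_n)$ as an honest multiple of $p$ in the suspension direction. This amounts to the computation of $\psi$ on $\pi_*L_{K(1)}\S$, and at $p=2$ the odd-degree terms involve $\eta$-type classes. I would first reduce to the universal case $R=\S_{K(1)}$ acting on $\xi$ via the module structure, using naturality of $\theta$ in ring maps $\S_{K(1)}^{S^n}\otimes\mathrm{Free}(x)\to R^{S^n}$, and check invertibility of the resulting endomorphism there. If this is too messy, I would simply cite \cite[Lemma 3.1.5]{CarmeliLuecke}, as the paper does.
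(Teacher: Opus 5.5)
Your overall frame is right: the $\theta$-LES gives $0\to\mathrm{coker}\,\theta_{n+1}\to\pi_n\G_m(R)\to\ker\theta_n\to0$. It therefore suffices to show that $\theta_n$ is bijective for $n\ge3$ and that $\ker\theta_2$ is torsion-free. (The paper itself gives no argument beyond the citation to Carmeli--Luecke.) Write $\theta^{(n)}(\xi):=\langle\theta_0(\xi\epsilon_n),\epsilon_n\rangle$, so that $\theta_n=\theta^{(n)}-\mathrm{id}$. Both substantive steps of your plan have gaps.

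\emph{Torsion-freeness.} You list ``$\theta_2$ injective'' as a goal. Together with $\theta_3$ surjective, that would force $\pi_2\G_m(R)=0$, which is false in general. For $R=\KU_p$ one has $\theta_0(\beta\epsilon_2)=\tfrac1p\psi^p(\beta\epsilon_2)=\beta\epsilon_2$, so $\theta_2=0$ on $\pi_2\KU_p\cong\Z_p$ and $\pi_2\G_m(\KU_p)\cong\Z_p$. Even after correcting this to $\pi_2\G_m(R)\cong\ker\theta_2$, the step ``torsion-free as a subgroup'' does not follow, because $\pi_2R$ can be entirely torsion. Examples are $\pi_2\KO_2\cong\Z/2$ and $\F_2[j^{\pm}]\eta^2$ for $L_{K(1)}\TMF$; the latter is exactly where the paper uses this lemma to conclude that $\theta_2$ is injective. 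You need a reason why $\theta^{(2)}$ has no nonzero torsion fixed points, and the proposal supplies none.

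\emph{The formula for $\theta_n$.} For a general $T(1)$-local ring, the $\psi$ in your formula $\theta_n=p^{n/2-1}\psi-1$ has no meaning. The natural candidate, the Frobenius of $R^{S^n}$ restricted to the square-zero summand, is just $p\,\theta^{(n)}$, so it does not determine $\theta^{(n)}$ when $\pi_nR$ has $p$-torsion. Writing it as $p^{n/2}\psi$ only makes sense when $\pi_{2k}R=\beta^k\pi_0R$. The odd case, which is what $n=3$ needs, is left as ``analogous''. The proposed reduction to $R=\S_{K(1)}$ also fails: a general $\xi\in\pi_nR$ is not a product of a class in $\pi_n\S_{K(1)}$ and a class in $\pi_0R$. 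The universal example for a natural operation on $\pi_n$ is $\mathrm{Free}_{K(1)}(\Sigma^n\S)$, not $\S_{K(1)}$.

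The missing idea is a reduction to $\KU_p$-algebras, where both gaps close.
\begin{enumerate}
\item Since $L_{T(1)}=L_{K(1)}$, write $R\simeq\lim_\Delta A^\bullet$ with $A^m=L_{K(1)}(\KU_p^{\otimes(m+1)}\otimes R)$. Since $\G_m$ preserves limits, and a limit of $2$-truncated spectra is $2$-truncated with $\pi_2$ equal to the limit of the $\pi_2$'s, it suffices to treat a $K(1)$-local $\E_\infty$-$\KU_p$-algebra $A$.
\item For square-zero $x,y$, the $\delta$-identities give $\theta(ax)=\psi(a)\theta(x)$ and $\theta(xy)=p\,\theta(x)\theta(y)$. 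Also $\theta(\beta^k\epsilon_{2k})=p^{k-1}\beta^k\epsilon_{2k}$, which can be computed in the torsion-free ring $\KU_p$.
\item Splitting $S^{2k+1}=S^{2k}\wedge S^1$, these give $\theta_{2k}(\beta^ka)=\beta^k(p^{k-1}\psi(a)-a)$ and $\theta_{2k+1}(\beta^kb)=p^k\beta^k\theta^{(1)}(b)-\beta^kb$. So for $n\ge3$ one has $\theta_n=pT-1$ with $T$ additive, which is invertible on the derived $p$-complete group $\pi_nA$.
\item Finally, $\ker\theta_2=\beta\cdot\{a:\psi(a)=a\}$ is torsion-free. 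If $pa=0$, expanding $\theta(pa)=0$ gives $p\theta(a)=(p^{p-1}-1)a^p$, hence $\psi(a)=p^{p-1}a^p=0$. Thus $\psi$ maps $p^k$-torsion into $p^{k-1}$-torsion, and a torsion $\psi$-fixed class must vanish.
\end{enumerate}
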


\section{$\G_mL_{K(1)}\TMF$}\label{sec5}
In this section, we use the results of the previous section to calculate $\G_m$ of the $K(1)$-localizations of $\TMF$, and $\tmf$.
\subsection{$p=2$}

\begin{prop}
There is an equivalence of connective $\Z$-modules
    \[\G_mL_{K(1)}\TMF=\Z/2 [1]\oplus(\Z/2)^\infty.\]
    Moreover, the map to $\glone L_{K(1)}\TMF$ detects $\eta$ via the first summand and is zero on the second.
\end{prop}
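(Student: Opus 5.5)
We compute $\G_mL_{K(1)}\TMF$ at $p=2$ from the $\theta$-LES attached to the fiber sequence $\G_m(R)\to\GL_1(R)\to\Omega^\infty R$, with $R=L_{K(1)}\TMF$. By \cref{truncated}, $\G_m R$ is 2-truncated and $\pi_2\G_mR$ is torsion-free. So only $\theta_0,\theta_1,\theta_2$ and the groups $\pi_iR$ for $0\le i\le 3$ matter.

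\textbf{Step 1: the input homotopy.} Recall that $L_{K(1)}\TMF_2$ is $KO_2^\wedge[[f]]$-like. More precisely,
\[\pi_*L_{K(1)}\TMF\simeq \pi_*KO_2\,\widehat{\otimes}\,\Z_2[j^{-1}]^\wedge_2,\]
the $2$-adic completion of $KO_*$-coefficients over the ring of $2$-adic functions in $j^{-1}$, which is the ordinary locus near the cusp. Concretely:
\begin{itemize}
\item $\pi_0 = \Z_2[j^{-1}]^\wedge_2 =: A$;
\item $\pi_1 = A/2\cdot\eta$;
\item $\pi_2 = A/2\cdot\eta^2$;
\item $\pi_3 = 0$.
\end{itemize}
The $\theta$-operation on $\pi_0$ is the Frobenius lift $\psi(x)=x^2+2\theta(x)$ induced by the canonical subgroup / $p$-adic Hecke operator $U$. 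On $q$-expansions it is $q\mapsto q^2$, so on $f=j^{-1}$ it is determined by the modular equation.

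\textbf{Step 2: degrees 2 and 3.} Since $\pi_3R=0$ and $\pi_2\G_m$ must be torsion-free while $\pi_2 R$ is $2$-torsion, we need $\theta_2$ injective on $\pi_2\GL_1=\pi_2R$ modulo the image of $\pi_3$. Using \cref{lem_theta_n}, $\theta_2(\xi)=\l\theta_0(\xi\epsilon_2),\epsilon_2\r-\xi$. I expect the $\theta_0$ term to vanish on square-zero classes of $2$-torsion, giving $\theta_2=-\mathrm{id}$, an isomorphism. Hence $\pi_2\G_m=0$, and the cokernel of $\theta_2$ vanishes, so $\pi_1\G_m=\ker\theta_1$.

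\textbf{Step 3: degree 1.} For $\theta_1$, the analogous formula gives $\theta_1(a\eta)=\l\theta_0(a\eta\epsilon_1),\epsilon_1\r - a\eta$. Here the known $KO$ computation (as in \cite{CarmeliLuecke}) yields a Frobenius-twisted term, $\theta_1(a\eta)=(\bar\psi(a)-a)\eta$, with $\bar\psi$ the Frobenius $a\mapsto a^2$ on $A/2=\F_2[[f]]$. So
\[\ker\theta_1=\{a\in \F_2[[f]]: a^2=a\}=\F_2,\]
spanned by $\eta$. This gives the $\Z/2[1]$ summand detecting $\eta$.

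\textbf{Step 4: degree 0.} Here $\pi_0\G_m=\ker(\theta_0)\oplus\mathrm{coker}(\theta_1)$ up to extension. $\mathrm{coker}\theta_1=\F_2[[f]]/(a^2-a)$ is an infinite-dimensional $\F_2$-vector space: representatives are odd-degree monomials $f^{2k+1}$ and completions. Meanwhile $\ker\theta_0$ on units $A^\times$ consists of units with $\psi(u)=u^2$. I would argue via $q$-expansions that only $\pm1$ qualify, with $-1$ mapping to $\pi_0\GL_1$. One must check how $\ker\theta_0$ and the boundary assemble. I expect $\ker\theta_0\cap A^\times=\{1\}$, after accounting for $\theta(-1)=-1\neq0$. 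Then $\pi_0\G_m\cong(\Z/2)^\infty$, and these classes map to zero in $\GL_1$, since they come from the boundary.

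\textbf{Step 5: splitting.} The $\Z$-module splits because there is no room for a $k$-invariant: it would be a map $H\F_2\text{-stuff}[0]\to\Z/2[2]$ after shift, and the $\eta$ detection in $\glone$ fixes it.

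\textbf{Main obstacle.} The main obstacle is Step 4 together with the precise identification of $\theta_1$ on $\pi_1$ as Frobenius-minus-identity on $\F_2[[f]]$. That requires knowing $\psi$ on $f$ mod $2$ is $f\mapsto f^2$, i.e.\ that the modular equation reduces to Frobenius. I would verify this via the $q$-expansion $f=q+\dots$, using \cref{prop_modular_form_identities}(2) and $\psi(q)=q^2$.
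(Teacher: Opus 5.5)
\textbf{Gap in Steps 1 and 4.} The coefficient ring in Step 1 is wrong in two ways, and this breaks the count in Step 4.

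First, $\Z_2[j^{-1}]^\wedge_2$ is $\pi_0L_{K(1)}\tmf$. For $\TMF$ the discriminant is inverted. Since $c_4$ is a unit $K(1)$-locally at $2$ and $j^{-1}=\Delta c_4^{-3}$, this inverts $j^{-1}$. So $\pi_0L_{K(1)}\TMF\cong\Z[j^{\pm}]_2$ and $\pi_1\cong\F_2[j^{\pm}]\eta$: the cusp is deleted, not approached.

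Second, a $2$-adically completed polynomial ring reduces mod $2$ to a polynomial ring, not to $\F_2[[f]]$. This second point is fatal. On $\F_2[[f]]$ the map $a\mapsto a^2-a$ hits the whole maximal ideal; for example $f^{2k+1}=b^2+b$ with $b=\sum_{i\geq 0}f^{(2k+1)2^i}$. So $\mathrm{coker}\,\theta_1\cong\F_2$, and your own setup gives $\pi_0\G_m\cong\Z/2$, not $(\Z/2)^\infty$. Your claim that odd monomials represent independent classes is false there. The infinite cokernel exists precisely because in $\F_2[j^{\pm}]$ such geometric series do not converge. Hence $\eta$ and the classes $j^{2k+1}\eta$, $k\in\Z$, stay independent. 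Compare \cref{sec6}, where passing to $\Z((j))_2$ turns all positive powers into boundaries.

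Even with $\F_2[f]$ in place of $\F_2[[f]]$, you would be computing the $\tmf$ answer, indexed by $\mathbb{N}$ rather than $\Z$. It is abstractly isomorphic, but it is not the map used later in the fracture square (cf.\ \cref{rem_tmf_TMF_K(1)}).

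\textbf{Further repairs.} In Step 2, the heuristic that $\theta_0$ vanishes on square-zero $2$-torsion classes is false. Indeed $\theta_0(\eta\epsilon_1)=\eta\epsilon_1$, which is exactly what you use in Step 3; the heuristic applied there would give $\theta_1=-\mathrm{id}$ and kill $\eta$. The correct input is $\theta_0(\eta^2\epsilon_2)=0$. You get it via \cref{lem_theta_n} from $\theta_2(\eta^2)=\eta^2$ in $\KO_2$, and extend it to $f\eta^2\epsilon_2$ by the $\delta$-ring product formula. You need $\theta_2$ surjective, not only injective (injectivity is free from \cref{truncated}), so that it contributes nothing to $\pi_1\G_m$.

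The ``main obstacle'' you name is not one. The formula $\theta_1(a\eta)=(a^2-a)\eta$ follows from the product formula and $\theta(\eta\epsilon_1)=\eta\epsilon_1$, with $a^2$ the honest square; no knowledge of $\psi(f)$ is needed.

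The real work in Step 4 is elsewhere. Over $\Z[j^{\pm}]_2$ there are nonconstant units such as $j^n$, so ``argue via $q$-expansions'' must become an actual argument. The paper uses the $\E_\infty$ $q$-expansion map $\TMF\to\KO((q))$, where $\theta(q)=0$, together with Voloch's transcendence result, to show that $\ker\theta_0$ consists of constants. Among the constants only $1$ survives, since $-1$ fails $\psi(u)=u^2$. Finally, the splitting in Step 5 is automatic, because $\mathrm{Ext}^2_\Z$ vanishes.
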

\begin{proof}
    By e.g., \cite[Corollary 3]{Laures}\footnote{It is worth remarking that this isomorphism comes from an isomorphism of spectra, but not of ring spectra (cf. e.g., \cite[Remark 2]{Laures}), as the source is not a $\KO$-algebra.},
we have
\[\pi_*L_{K(1)}\tmf\simeq\pi_*\KO[j^{-1}]_2.\]
Since $j=c_4^3\Delta^{-1}$, inverting $\Delta^{24}$ is the same as inverting $j^{-1}$, and we get
\[\pi_*L_{K(1)}\TMF\simeq\pi_*\KO[j^{\pm}]_2.\]
The $\theta$-LES becomes
\[
\begin{tikzcd}
     \pi_2\G_mL_{K(1)}\TMF\arrow[r]&\F_2 [j^{\pm}]\eta^2 \arrow[r,"\theta_2"]& \F_2 [j^{\pm}]\eta^2\\
    \pi_1\G_mL_{K(1)}\TMF \arrow[r]&\F_2 [j^{\pm}]\eta \arrow[r,"\theta_1"]&\F_2 [j^{\pm}]\eta \\
     \pi_0\G_mL_{K(1)}\TMF\arrow[r]&(\Z[j^{\pm}])_2^\times \arrow[r,"\theta_0"]&\Z[j^{\pm}]_2. \\
\end{tikzcd}
\]
To calculate the $\theta_i$, we proceed as follows:

\begin{itemize}
    \item Consider the classical $q$-expansion map
    \begin{equation*}
\Z[j^{\pm}]_2\rightarrow \Z((q))_2
    \end{equation*}
    sending $j$ to the famous
    \[j=q^{-1}+744+196884q+\dots\]
    as recorded in \cref{prop_modular_form_identities}. By \cite{davies2025derived}, $q$-expansion admits an
 $\mathbb{E}_\infty$-enhancement---that is, there is an $\E_\infty$-map  $\TMF\rightarrow \KO((q))$ which on $\pi_0$ (and after $K(1)$-localization) recovers the classical $q$-expansion map. Therefore, the source and target acquire canonical $\theta$-operations, and $q$-expansion is compatible with these. Moreover, $\theta$ on the target is determined by $\theta(q)=0$. In particular, its kernel consists of the monomials in $q$. By a transcendence argument \cite{voloch1996transcendence}, any element of the source which is killed by $\theta_0$ must be constant in $j$. On the constants, $\theta_0$ is the unique $\delta$ structure on  $\Z_2$, and its kernel is $\F_2^\times\simeq 0$. 
    
    \item To calculate $\theta_1$ we proceed as follows: write an element in $\pi_1$ as $f(j)\eta$ where $f(j)\in\Z/2[j^\pm]$.
    Then by \Cref{lem_theta_n} we have
    \[\theta_1(f(j)\eta)=\l\theta_0( f(j)\eta\epsilon_1),\epsilon_1\r.\]
    Using the formula for $\theta(xy)$ with $x=f(j)$ and $y-\theta\epsilon_1$, this becomes
    \[\l f(j)^2\theta_0(\eta\epsilon_1)+\theta_0(f(j))(\eta\epsilon_1)^2,\epsilon_1\r-f(j)\eta.\]
    Finally, using $\theta(\eta\epsilon_1)=\eta\epsilon_1$ (which can be checked in $\KO_2$ where $\theta_1(\eta)=0$, cf. e.g., \cite{CarmeliYuan}) this becomes
    \[=(f(j)^2-f(j))\eta.\]
    Thus $\theta_1$ is injective except on the constant polynomials (i.e., on $\eta$), and has a cokernel generated by $\eta$ and the odd powers $j^{2\Z+1}\eta$ (in the cokernel, $j^{2n}\eta$ is equivalent to $j^n\eta$).
    \item  Since $\pi_3\KO[j^{\pm}]=0$,  $\pi_2\G_mL_{K(1)}\TMF$ is a subgroup of $\F_2[j^\pm]\eta^2$ and $\pi_2\G_mL_{K(1)}\TMF$ is torsion-free for general reasons (see \cref{truncated}), we see that $\pi_2\G_mL_{K(1)}\TMF$ is zero, and $\theta_2$ is injective. To show that it is also surjective, we calculate it explicitly, using \Cref{lem_theta_n} and $\delta$-ring identities to obtain
    \[\theta_2(f(j)\eta^2)=\l f(j)^2\theta_0(\eta^2\epsilon_2)+\theta_0(f(j))(\eta^2\epsilon_2)^2-f(j)\eta^2\epsilon_2,\epsilon_2\r.\]
    Using $\theta_0(\eta^2\epsilon_2)=0$ (which again can be checked in $\KO_2$ where $\theta_2(\eta^2)=\eta^2$) and $\eta^4=0$, this becomes
    \[=-f(j)\eta^2.\]
\end{itemize}
So we have the equivalence
\[\G_mL_{K(1)}\TMF=\Z/2 [1]\oplus(\Z/2)^\infty\]
by combining the kernels and cokernels of $\theta_i$.
\end{proof}
\begin{rem}
    As is evident from the first display of the proof, the analogous calculation for $\tmf$ is almost identical---the only change is that the point $j=\infty$ has been added back in, so one replaces $\Z[j^{\pm}]_2$ with $\Z[j^{-1}]_2$.
\end{rem}
\begin{cor}
    There is an equivalence of connective $\Z$-modules
    \[\G_mL_{K(1)}\tmf=\Z/2 [1]\oplus(\Z/2)^\infty.\]
    Moreover, the map to $\glone L_{K(1)}\tmf$ detects $\eta$ with the first summand and is zero on the second.
\end{cor}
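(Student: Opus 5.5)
We would run the same argument as in the preceding proposition, with $\pi_*L_{K(1)}\TMF\simeq\pi_*\KO[j^{\pm}]_2$ replaced by $\pi_*L_{K(1)}\tmf\simeq\pi_*\KO[j^{-1}]_2$, as in \cite[Corollary 3]{Laures} and the remark above. The $\theta$-LES of \cref{sec4} then reads
\[\pi_n\G_mL_{K(1)}\tmf\to \pi_n\glone L_{K(1)}\tmf\xrightarrow{\theta_n}\pi_nL_{K(1)}\tmf,\]
and by \cref{truncated} only $n\le 2$ (together with $\pi_2\G_m$ being torsion-free) need to be analysed. Write $t=j^{-1}$. The three relevant rows are $\Z[t]_2^\times\to\Z[t]_2$, $\F_2[t]\eta\to\F_2[t]\eta$ and $\F_2[t]\eta^2\to\F_2[t]\eta^2$.

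\emph{Degree $0$.} Compose with $\tmf\to\TMF\to\KO((q))$, the $\E_\infty$ $q$-expansion of \cite{davies2025derived}. On $\pi_0$ of $K(1)$-localizations this is the injection $\Z[t]_2\hookrightarrow\Z[j^{\pm}]_2\hookrightarrow\Z((q))_2$, and it is compatible with $\theta$. An element of $\ker\theta_0$ therefore maps to an element of $\ker\theta_0$ on $L_{K(1)}\TMF$. By the preceding proof this kernel consists only of constants, and on $\Z_2^\times$ the kernel of $\theta_0$ is trivial. So $\ker\theta_0=0$.

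\emph{Degree $1$.} \cref{lem_theta_n} and the $\delta$-ring identities give the same formula as before, $\theta_1(f\eta)=(f^2-f)\eta$ for $f\in\F_2[t]$, using $\theta(\eta\epsilon_1)=\eta\epsilon_1$, which is checked in $\KO_2$. The kernel is spanned by $\eta$, giving $\Z/2[1]$. The cokernel of $f\mapsto f^2+f$ on $\F_2[t]$ is spanned by $1$ and the odd powers $t^{2k+1}$, since even powers reduce to lower ones. It is therefore an infinite-dimensional $\F_2$-vector space, and it contributes $(\Z/2)^\infty$ in degree $0$.

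\emph{Degree $2$.} $\pi_2\G_m$ injects into the $2$-torsion group $\F_2[t]\eta^2$, because $\pi_3L_{K(1)}\tmf=0$. Being torsion-free, it vanishes. The explicit formula $\theta_2(f\eta^2)=-f\eta^2$ shows that $\theta_2$ is surjective, so nothing passes to $\pi_1$ from degree $2$.

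Assembling the pieces gives $\pi_0=(\Z/2)^\infty$ (from $\operatorname{coker}\theta_1$), $\pi_1=\Z/2$ (from $\ker\theta_1$) and $\pi_2=0$. We then need to see that the result splits as a $\Z$-module. A connective $\Z$-module with $\pi_0$ and $\pi_1$ both $\F_2$-vector spaces has a single $k$-invariant in $\mathrm{Ext}^2_\Z(\pi_0,\pi_1)$, and this group vanishes because $\Z$ has global dimension $1$. Hence $\G_mL_{K(1)}\tmf\simeq\Z/2[1]\oplus(\Z/2)^\infty$. The $\Z/2[1]$ summand maps to $\eta\in\pi_1\glone$ by construction. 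The summand coming from $\operatorname{coker}\theta_1$ is sent to zero in $\pi_0\glone$ by exactness of the LES at $\pi_0\G_m\to\pi_0\glone\xrightarrow{\theta_0}\pi_0L_{K(1)}\tmf$, since $\ker\theta_0=0$.

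The main obstacle is degree $0$. Two points need care there. First, we need the comparison with $\TMF$ (equivalently, with $q$-expansion) to be injective on $\pi_0$. Second, we need the transcendence argument of \cite{voloch1996transcendence} to still apply, which it does because it only uses the image in $\Z((q))_2$. Everything else is formal.
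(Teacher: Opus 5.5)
Your proposal is correct and follows the paper's own route. The paper's argument is simply the remark that the $\TMF$ computation goes through verbatim once $\Z[j^{\pm}]_2$ is replaced by $\Z[j^{-1}]_2$, and your additions only fill in details the paper leaves implicit: you treat degree $0$ by pulling back along the injection $\Z[j^{-1}]_2\hookrightarrow\Z[j^{\pm}]_2$ rather than rerunning the transcendence argument, and you justify the splitting explicitly via $\mathrm{Ext}^2_\Z=0$.
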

\begin{rem}\label[rem]{rem_tmf_TMF_K(1)}
    The map $\tmf\to\TMF$ does not induce an isomorphism on $\G_mL_{K(1)}$---in $\pi_0$, the $(\Z/2)^\infty$ in the source is indexed by $\mathbb{N}$ while in the target it is indexed by $\mathbb{Z}$.
\end{rem}

\subsection{$p=3$}
We have (cf. \cite[Chapter 12.1]{TMFbook})
\[\pi_*L_{K(1)}\tmf\simeq\pi_*\KO[j^{-1}]_3,\]
and thus
\[\pi_*L_{K(1)}\TMF\simeq\pi_*\KO[j^{\pm}]_3,\]
which is 4-concentrated. Thus, via the $\theta$-LES, the corresponding strict units spectrum $\G_m$ is concentrated in $\pi_0$ and is equal to the kernel of $\theta_0$. By the same argument as at $p=2$, the kernel of $\theta_0$ is containted in the constant functions $\Z_3$, where $\theta_0$ is the unique $\delta$-structure on $\mathbb{Z}_3$, whose kernel is $\F_3^\times$. To conclude, we have
\begin{prop}
There is an equivalence of connective $\Z$-modules
    \begin{equation*}
\G_mL_{K(1)}\TMF=\F_3^\times,
    \end{equation*}
    and the map to $\glone L_{K(1)}\TMF$ is injective onto the Teichm\"{u}ller lifts $\F_3^\times\subset\Z_{3}\subset \pi_0L_{K(1)}\TMF$.
\end{prop}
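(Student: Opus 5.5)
We follow the $\theta$-LES of \cref{sec4}, which gives a fibre sequence $\G_mL_{K(1)}\TMF\to\GL_1L_{K(1)}\TMF\xrightarrow{\theta}\Omega^\infty L_{K(1)}\TMF$. We then use the identification $\pi_*L_{K(1)}\TMF\simeq\pi_*\KO[j^{\pm}]_3$ recalled above. At $p=3$ we have $\pi_*\KO_3\simeq\Z_3[v^{\pm}]$ with $|v|=4$, so $\pi_*L_{K(1)}\TMF$ vanishes outside degrees divisible by $4$. By \cref{truncated}, $\G_mL_{K(1)}\TMF$ is $2$-truncated. We now read off the homotopy groups from the long exact sequence. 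For $\pi_2$, the groups $\pi_2$ and $\pi_3$ of $\GL_1$ and $\Omega^\infty$ all vanish, so $\pi_2\G_m=0$. For $\pi_1$, the sequence $\pi_2\Omega^\infty R=0\to\pi_1\G_m\to\pi_1\GL_1R=0$ gives $\pi_1\G_m=0$. For $\pi_0$, the sequence $\pi_1\Omega^\infty R=0\to\pi_0\G_m\to(\Z[j^{\pm}]_3)^\times\xrightarrow{\theta_0}\Z[j^{\pm}]_3$ identifies $\pi_0\G_m$ with $\ker\theta_0$. Note that $\pi_0\GL_1$ consists of the units of the $3$-completed ring. So $\G_mL_{K(1)}\TMF$ is discrete, equal to $\ker\theta_0$, and it includes into $\pi_0$ of $\glone$.

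The main step is computing $\ker\theta_0$, and we copy the $p=2$ argument. The $\E_\infty$ $q$-expansion map $\TMF\to\KO((q))$ of \cite{davies2025derived}, after $K(1)$-localization at $3$, is compatible with $\theta$. On $\pi_0$ it is the injective map $\Z[j^{\pm}]_3\to\Z((q))_3$. On the target, $\theta$ is the $\delta$-structure determined by $\theta(q)=0$, that is, by the Frobenius lift $q\mapsto q^3$. A unit $u$ with $\theta_0(u)=0$ satisfies $\psi(u)=u^3$. We then argue, as at $p=2$ via the transcendence result of \cite{voloch1996transcendence}, that such a $u$ cannot involve $j$ nontrivially. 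Roughly, $\psi(j(q))=j(q^3)$ is not a polynomial expression in $j(q)$ of the kind $u(j)^3$ would require, unless $u$ is constant. Hence $u\in\Z_3^\times$.

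This transcendence step is the one point needing care. Its input is that the $3$-adic $q$-expansion of $j$ and its Frobenius image $j(q^3)$ are algebraically independent over the relevant ring. This is exactly what was used at $p=2$, and the statement of \cite{voloch1996transcendence} covers every prime, so we invoke it verbatim.

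It remains to compute $\ker\theta_0$ on the constants. On $\Z_3$ the $\delta$-structure is $\theta(x)=(x-x^3)/3$, so $\theta(x)=0$ exactly when $x^3=x$. The units satisfying this are the Teichmüller lifts $\pm1$, which form the group $\F_3^\times$. Hence $\G_mL_{K(1)}\TMF\simeq\F_3^\times$ in degree $0$, and the map to $\glone L_{K(1)}\TMF$ is injective onto the Teichmüller lifts $\F_3^\times\subset\Z_3\subset\pi_0L_{K(1)}\TMF$.
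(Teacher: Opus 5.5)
Your reduction matches the paper's. The $\theta$-LES, the fact that $\pi_*\KO_3$ is concentrated in degrees divisible by $4$, and the truncation lemma make $\G_mL_{K(1)}\TMF$ discrete and equal to $\ker\theta_0$ on $(\Z[j^{\pm}]_3)^\times$. The $q$-expansion is then used to push this kernel into the constants, where it is $\{\pm1\}$.

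The gap is in the one step with real content: ruling out non-constant units in $\ker\theta_0$. The input you name there is false. The series $j(q)$ and $j(q^3)$ are algebraically dependent over $\Z$, since Kronecker's modular equation gives $\Phi_3(j(q),j(q^3))=0$. In fact $\psi(j)$ is itself an element of $\pi_0L_{K(1)}\TMF=\Z[j^{\pm}]_3$, so $j(q^3)$ \emph{is} a $3$-adically convergent Laurent series in $j(q)$. Your heuristic, that $\psi(u)=u^3$ would demand an impossible expression of $j(q^3)$ in terms of $j(q)$, therefore has nothing behind it. Moreover, $u$ is a convergent Laurent series rather than a polynomial, so algebraic independence would not even be the right kind of statement. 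Citing Voloch does not rescue this, since no theorem can supply the independence you describe.

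What actually does the work is the intermediate fact the paper states at $p=2$: on $(\Z((q))_3)^\times$, the kernel of $\theta$ consists of the monomials $\pm q^m$. At $p=3$ this is elementary.
\begin{enumerate}
\item After dividing a solution of $\psi(x)=x^3$ by $\pm q^m$, write $x=w(1+3z)$ with $w\in 1+q\Z_3[[q]]$.
\item The series $L=\log w+\log(1+3z)=\sum_k b_kq^k$ satisfies $L(q^3)=3L(q)$.
\item Comparing coefficients gives $b_k=0$ for $3\nmid k$, then $b_{3k}=b_k/3=0$ inductively, and $b_0=0$.
\item $L=0$ forces $x=1$. Indeed $\log w\in 3\Z_3[[q]]$ gives $w\in 1+3q\Z_3[[q]]$, and $\log$ is injective on $1+3\Z((q))_3$ because $p$ is odd.
\end{enumerate}

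Then finish in $\Z[j^{\pm}]_3$. A unit $u$ with $\theta_0(u)=0$ reduces mod $3$ to $\pm j^n$ and has $q$-expansion $\pm q^m$, so $j(q)^n\equiv\pm q^m \pmod 3$. Now $qj(q)\equiv 1+w$ with $0\neq w\in q\F_3[[q]]$, because the coefficient $21493760$ of $q^2$ in $j$ is $\equiv 2 \pmod 3$. Hence $(1+w)^n\neq 1$ for $n\neq 0$, which forces $n=m=0$. Injectivity of the $q$-expansion then gives $u=\pm1$.

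With this in place of the algebraic-independence claim, the rest of your argument goes through, and no transcendence theorem is needed.
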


\subsection{$p\geq 5$}
Similar to the previous cases, we have the homotopy groups of $L_{K(1)}\tmf$ (cf. \cite[Chapter 12.1]{TMFbook}) 
\[\pi_*L_{K(1)}\tmf=\mathcal{O}\left(\P^1_\Z\setminus S_p\right)_p[b^\pm],\]
where $|b|=4$ and $S_p$ is an integral lift of the $\F_p$-scheme of supersingular $j$-values. Again, since $j=c_4^3\Delta^{-1}$, inverting $\Delta$ deletes the point at $\infty$ in $\P^1$, i.e., inverts $j^{-1}$. Hence, we get
\[\pi_*L_{K(1)}\TMF=\Z[j,(ss_p(j))^{-1}]_p[b^{\pm}],\]
 and $ss_p(j)$ means an integral lift of the supersingular polynomial at $p$ (see \cref{prop_modular_form_identities}). 
 The homotopy groups are still 4-concentrated, so by $\theta$-LES, $\G_m$ is concentrated in $\pi_0$ and equal to the kernel of $\theta_0$. Just as at the previous primes, this kernel is contained in the constant functions $\mathbb{Z}_p$, and coincides with $\mathbb{F}_p^\times$.
 \begin{prop}
There is an equivalence of connective $\Z$-modules
    \begin{equation*}
\G_mL_{K(1)}\TMF=\F_p^\times,
    \end{equation*}
    where all the elements are mapped injectively into $\pi_0L_{K(1)}\TMF$ through Teichm\"{u}ller lifts.
\end{prop}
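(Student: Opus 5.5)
The plan follows the $\theta$-LES of \cref{sec4} at $p\geq 5$, in the same way as at $p=2,3$. I will use the description
\[\pi_*L_{K(1)}\TMF=\Z[j,(ss_p(j))^{-1}]_p[b^{\pm}],\qquad |b|=4.\]
Since $p\geq 5$, this ring is concentrated in degrees divisible by $4$, so $\pi_1$, $\pi_2$ and $\pi_3$ all vanish. By \cref{truncated}, $\G_mL_{K(1)}\TMF$ is $2$-truncated. The fiber sequence $\G_m\to\GL_1\to\Omega^\infty$ then gives
\[\pi_1\G_m=\operatorname{coker}(\theta_2)=0,\qquad \pi_2\G_m=\ker(\theta_2)\subset\pi_2=0.\]
It also gives $\pi_0\G_m=\ker\theta_1+$(contribution from $\operatorname{coker}\theta_1$), and both vanish since $\pi_1=0$. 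Hence $\G_m$ is discrete, with
\[\pi_0\G_m=\ker\bigl(\theta_0:(\pi_0L_{K(1)}\TMF)^\times\to\pi_0L_{K(1)}\TMF\bigr).\]

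To compute this kernel I will use the $\E_\infty$ $q$-expansion map $\TMF\to\KO((q))$ of \cite{davies2025derived}, localized at $K(1)$ and $p$. On $\pi_0$ it is the classical map
\[\Z[j,ss_p(j)^{-1}]_p\to\Z((q))_p,\]
and it commutes with $\theta$. On the target, $\theta$ is the $\delta$-structure with $\theta(q)=0$, i.e. Frobenius lift $q\mapsto q^p$. A unit $u$ with $\theta_0(u)=0$ satisfies $\psi(u)=u^p$. The first step is to check that the $q$-expansion map is injective. This holds because $j$ is transcendental over $\Z_p$ inside $\Z((q))_p$, and $p$-adic completion should preserve this; I would verify it by reducing mod $p$, where $j\mapsto q^{-1}+\dots$ is clearly injective on $\F_p[j,ss_p^{-1}]$.

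Next, the units of $\Z((q))_p$ killed by $\theta$ are of the form $\omega q^n$ with $\omega$ a Teichmüller root of unity. The Voloch transcendence argument \cite{voloch1996transcendence} (as used at $p=2$) should show that any $f(j)$ in the source mapping to such an element must be constant in $j$. Modulo $p$, a unit in $\F_p[j,ss_p^{-1}]$ that is a monomial in $q$ forces constancy, because a nonconstant rational function of $j$ has a pole or zero away from the cusp. This last point is where the main obstacle lies: the argument must be made rigorous $p$-adically, not just mod $p$, and I would rely on the cited transcendence result for this.

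Once the kernel is known to lie in the constants $\Z_p$, the unique $\delta$-structure there has $\theta(a)=(a-a^p)/p$. Its zeros among units are exactly the $(p-1)$-st roots of unity, i.e. the Teichmüller lifts of $\F_p^\times$. So $\G_mL_{K(1)}\TMF\simeq\F_p^\times$, concentrated in degree $0$, and it maps injectively via the Teichmüller lifts into $\pi_0L_{K(1)}\TMF$.
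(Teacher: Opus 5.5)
Your proposal follows the paper's argument essentially step for step. The $4$-concentration of $\pi_*L_{K(1)}\TMF$ together with the truncation lemma makes $\G_m$ discrete with $\pi_0=\ker\theta_0$. The $\E_\infty$ $q$-expansion map and Voloch's transcendence theorem force that kernel into the constants $\Z_p$, and the $\delta$-structure on $\Z_p$ then leaves exactly the Teichm\"uller lifts of $\F_p^\times$.

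Two small corrections:
\begin{itemize}
\item $\pi_0\G_m$ is an extension of $\ker\theta_0$ by $\operatorname{coker}\theta_1$; no $\ker\theta_1$ enters.
\item Your ``pole or zero away from the cusp'' heuristic does not prove the mod $p$ step. That step is precisely the content of Voloch's theorem, that $q$ is transcendental over $\F_p(j)$. Once it is granted, your injectivity observation already lifts the conclusion $p$-adically, so there is no further obstacle there.
\end{itemize}
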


\begin{rem}\label[rem]{delta}
    In the proof above, we reduce the analysis of $\theta_0(j)$ to the $\delta$ structure of $\KU((q))$. The same strategy allows us to inductively compute $\theta_0(j)$ in $\pi_0L_{K(1)}\TMF$ using a series in $j$:
    Since each ring under consideration is $p$-torsionfree, the $\delta$ structure induced by $\theta_0$ is equivalent to the lift of Frobenius, given by
\[\varphi(x):=x^p+p\theta_0(x).\]
In $\Z((q))_p$ this Frobenius lift sends $j(q)$ to $j(q^p)$, as $\delta(q)=0$. Consequently, the preimage of $j(q^p)$ under $q$-expansion is precisely our target $\varphi(j)$.
On the other hand, classical modular form theory provides the \emph{modular polynomials} $\Phi_p(x,y)\in\Z[x,y]$, which satisfies $\Phi_p(j(q),j(q^p))=0$. Although the explicit coefficients of $\Phi_p$ are rather complicated (cf. \cite{broker2012modular}), the classical Kronecker's theorem gives the congruence relation
\[\Phi_p(x,y)\equiv (y^p-x)(y-x^p)\quad(\mathrm{mod}\,p).\]
Hence, using Hensel's lemma in $\pi_0L_{K(1)}\TMF\simeq\Z[j,(ss_p(j))^{-1}]_p$, we obtain the unique factorization of $\Phi_p(j,y)$, where the factor $y-j^p$ lifts uniquely to $y-\varphi(j)$ by comparing the power $q^{-p}$.

The complexity of computing $\theta_0(j)$ is sensitive to different primes $p$, because the supersingular polynomial may appear in the denominator and the modular polynomial is far beyond control. The authors computed the first several expansions for $p=2$ and $p=3$:
\begin{align*}
    &\varphi_2(j)\equiv j^2+16j\quad(\mathrm{mod}\,32),\\
    &\varphi_3(j)\equiv j^3+9j^2\quad(\mathrm{mod}\, 27).
\end{align*}
\end{rem}

\section{$\G_mL_{K(1)}L_{K(2)}\TMF$}\label{sec6}
In this section we analyze $\G_m$ of the remaining node of the chromatic fracture square. At $p=2,3$, the key is to use the map $L_{K(1)}\TMF\rightarrow L_{K(1)}L_{K(2)}\TMF$ to identify the target as a completion of the source, and then carry over calculations done in previous sections. At large primes, we instead rely on the sparseness of homotopy groups to make the required calculation.

The only issue for $L_{K(1)}L_{K(2)}\TMF$ is that the $\mathbb{E}_\infty$-enhancement of $q$-expansion does not induce a nontrivial map $L_{K(1)}L_{K(2)}\TMF\to\KU((q))$, and we must compute the kernel of $\theta_0$ by hand. This task is not accessible without an explicit understanding of $\delta(j)$. Nevertheless, we are able to identify certain subgroups of $\pi_0\G_mL_{K(1)}L_{K(2)}\TMF$ that suffices for our purpose. Indeed, because this $\pi_0$ enters only at the terminal end of the relevant long exact sequence, a subgroup is already sufficient to establish the main theorem of computing the strict units of $\TMF$.
\subsection{$p=2$}
 The fiber of $L_{K(1)}\TMF\rightarrow L_{K(1)}L_{K(2)}\TMF$ is isomorphic to that of $\TMF_2\rightarrow L_{K(2)}\TMF$ by chromatic fracture. Moreover, (cf. \cite[Proposition 1.6.14]{BehrensHandbook})
\[\pi_*L_{K(2)}\TMF=\pi_*\TMF_{(2,c_4)}\]
 and the map $\TMF_2\rightarrow L_{K(2)}\TMF$ is the $c_4$-completion map. By the formula $j=c_4^3\Delta^{-1}$, $c_4$-completion coincides with $j$-completion. Furthermore, by \cite[Proposition 1.6.8]{BehrensHandbook}, we have 
 $\pi_*L_{K(1)}L_{K(2)}\TMF\simeq \pi_*\TMF_{(2,c_4)}[c_4^{-1}]_2.$
 Combining these, we find that the map \[\pi_*L_{K(1)}\TMF\rightarrow \pi_*L_{K(1)}L_{K(2)}\TMF\]
 exhibits the target as the ``$j$-completion'' of the source---that is, the homotopy groups of the target coincide with those of the former but with $\Z[j^{\pm}]_2$ replaced by $\Z((j^{}))_2$. 
So the $\theta$-LES is
\[
\begin{tikzcd}
     \pi_2\G_mL_{K(1)}L_{K(2)}\TMF\arrow[r]&\F_2 ((j))\eta^2 \arrow[r,"\theta_2"]& \F_2 ((j))\eta^2\\
    \pi_1\G_mL_{K(1)}L_{K(2)}\TMF \arrow[r]&\F_2 ((j))\eta \arrow[r,"\theta_1"]&\F_2 ((j))\eta \\
     \pi_0\G_mL_{K(1)}L_{K(2)}\TMF\arrow[r]&(\Z((j)))_2^\times \arrow[r,"\theta_0"]&\Z((j))_2. \\
\end{tikzcd}
\]

 The $\theta$-structure is determined by the one on $\Z[j^{\pm}]_2$, and so the formulas for $\theta_i$ are still valid, so the calculation of $\G_mL_{K(1)}L_{K(2)}\TMF$ is thus similar to that of $\G_mL_{K(1)}\TMF$ with the change from Laurent polynomials to Laurent series, giving rise to the following two discrepancies:
 \begin{enumerate}
     \item The calculation in the cokernel of $\theta_1$ differs. Positive powers $j^n$ are now in the image of $\theta_1$, detected by the element $j^n+j^{2n}+...$, so the cokernel is spanned by $j^0$ and the odd negative powers of $j$.
     \item The kernel of $\theta_0$ acting on $\Z((j))_2^\times$ is more tedious to determine. However, $\mathrm{Ker}(\theta_0)$ is not important for the global calculation of $\G_m\TMF$, since the composition
\[\pi_0\G_mL_{K(1)}\TMF\to\pi_0\G_mL_{K(1)}L_{K(2)}\TMF\twoheadrightarrow\mathrm{Ker}(\theta_0)\]
     is always 0 by diagram chasing.
 \end{enumerate}
Hence, we have


\begin{prop}\label{LK1LK2_at_p=2}
    There is an equivalence of connective $\mathbb{Z}$-modules 
    \[\mathrm{fib}(\G_mL_{K(1)}L_{K(2)}\TMF\to\mathrm{Ker}(\theta_0))=\Z/2 [1]\oplus(\Z/2)^\infty.\]
    Moreover, the forgetful map from this fiber to $\glone L_{K(1)}L_{K(2)}\TMF$ detects $\eta$ with the first summand and is zero on the second, and the map
    \[\G_mL_{K(1)}\TMF=\Z/2 [1]\oplus(\Z/2)^\infty\to\G_mL_{K(1)}L_{K(2)}\TMF.\]
    is an isomorphism in degree 1 and a surjection on $\pi_0$ with kernel isomorphic to $(\Z/2)^\infty$.
\end{prop}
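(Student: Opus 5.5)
The plan is to run the $\theta$-LES displayed above for $L_{K(1)}L_{K(2)}\TMF$, imitating the computation of $\G_mL_{K(1)}\TMF$ at $p=2$, and then compare the two along the map $L_{K(1)}\TMF\to L_{K(1)}L_{K(2)}\TMF$. That map induces the completion $\Z[j^{\pm}]_2\to\Z((j))_2$ on $\pi_0$ and is compatible with $\theta$ by naturality. By \cref{truncated}, $\G_m$ is 2-truncated and $\pi_2\G_m$ is torsion-free. Since $\pi_3$ vanishes, we get $\pi_2\G_m=\ker\theta_2\subset\F_2((j))\eta^2$, which is torsion and therefore zero.

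To handle $\theta_2$ and $\theta_1$, I will reuse the formulas from \Cref{lem_theta_n}. The relations $\theta_0(\eta^2\epsilon_2)=0$ and $\theta_0(\eta\epsilon_1)=\eta\epsilon_1$ are checked in $\KO_2$, so they hold in any $\KO$-like target. They still apply here, because the proof only used the $\delta$-ring identities together with the image of $\eta$. This gives the following.
\begin{itemize}
\item $\theta_2(f\eta^2)=-f\eta^2$, so $\theta_2$ is bijective and contributes nothing to $\pi_1\G_m$.
\item $\theta_1(f\eta)=(f^2-f)\eta$ for $f\in\F_2((j))$.
\end{itemize}
The kernel of $f\mapsto f^2-f$ on $\F_2((j))$ consists of $\{0,1\}$, because $\F_2((j))$ is a field containing only the two roots of $x^2-x$. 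Hence $\pi_1\G_m=\Z/2$, generated by $\eta$.

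The cokernel of $\theta_1$ is the Artin–Schreier cokernel. Every power series in $j\F_2[[j]]$ is of the form $f^2-f$, solved by $f=-(g+g^2+g^4+\cdots)$. Negative even powers reduce via $j^{-2n}\equiv j^{-n}$. So the cokernel has basis $1$ and the odd negative powers $j^{-(2k+1)}$, giving $(\Z/2)^\infty$. This cokernel sits in $\pi_0$ as the fiber over $\ker\theta_0$, and it splits, since everything in sight is an $\F_2$-vector space. This identifies $\mathrm{fib}(\G_mL_{K(1)}L_{K(2)}\TMF\to\ker\theta_0)=\Z/2[1]\oplus(\Z/2)^\infty$. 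The claim about the forgetful map to $\glone$ is then immediate: the $\pi_1$ class is $\eta$, and the $\pi_0$ part comes from the connecting map out of $\pi_1\Omega^\infty R$, so it maps to zero in $\pi_0\GL_1$.

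For the comparison, I will use naturality of the $\theta$-LES. In degree 1 both sides are $\{0,1\}\eta$, so the map is an isomorphism. In degree 0 the relevant map is the map of cokernels $\F_2[j^{\pm}]/\mathrm{im}\to\F_2((j))/\mathrm{im}$. On the source basis $\{1\}\cup\{j^{2k+1}\}_{k\in\Z}$, the positive odd powers die and $1$ and the negative odd powers map bijectively onto the target basis. So the map is surjective, with kernel spanned by $j^{2k+1}$ for $k\ge0$, which is $(\Z/2)^\infty$. Finally, the image of $\pi_0\G_mL_{K(1)}\TMF$ lands in this fiber, because its composite to $\ker\theta_0$ vanishes (item 2 above). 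This last point holds since $\G_mL_{K(1)}\TMF$ has $\pi_0$ entirely from the cokernel, as $\ker\theta_0=0$ there.

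The main obstacle I expect is justifying that the $\theta$-operation on $\pi_*L_{K(1)}L_{K(2)}\TMF$ is genuinely the continuous extension of the one on $\Z[j^{\pm}]_2$, and that $\eta$-relations transfer. I would handle this by $j$-adic continuity of $\theta$ on the completed ring, using that the $\delta$-structure is determined by the Frobenius lift on a 2-torsion-free ring. I would also check carefully that the Artin–Schreier surjectivity onto $j\F_2[[j]]$ indeed converges $j$-adically.
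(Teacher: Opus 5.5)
Your proposal is correct and follows essentially the same route as the paper. Both arguments run the $\theta$-LES for the $j$-completed ring, reuse $\theta_1(f\eta)=(f^2-f)\eta$ and $\theta_2(f\eta^2)=-f\eta^2$, compute the Artin--Schreier cokernel on $\F_2((j))$ (basis $1$ and the $j^{-(2k+1)}$), and compare with $\G_mL_{K(1)}\TMF$ by naturality, using that the latter's $\pi_0$ comes entirely from $\mathrm{coker}\,\theta_1$ and so maps trivially to $\mathrm{Ker}(\theta_0)$.

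The $j$-adic continuity of $\theta$ that you flag as the main obstacle is not actually needed here. The value $\theta_0(f)$ only ever appears multiplied by $(\eta\epsilon_n)^2=0$, and the cross term is killed by $2\eta=0$. So you only need $\theta_0(\eta\epsilon_1)$ and $\theta_0(\eta^2\epsilon_2)$, and these can be pushed forward along the $\E_\infty$-map from $L_{K(1)}\TMF$.
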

\begin{rem}
    In juxtaposition to the second part of the above proposition, we see that the map $\G_m L_{K(1)}\tmf \to \G_mL_{K(1)}L_{K(2)}\TMF$ is an isomorphism if $\mathrm{Ker}(\theta_0)$ vanishes.
\end{rem}
 

\subsection{$p=3$}
The strategy is identical to the case $p=2$, but simpler, as the homotopy groups become sparser: we again have (cf. \cite[Proposition 1.6.14]{BehrensHandbook})
\[\pi_*L_{K(2)}\TMF=\pi_*\TMF_{(3,c_4)}\]
and find that $\pi_*L_{K(1)}L_{K(2)}\TMF$ is 4-concentrated with $\pi_0L_{K(1)}L_{K(2)}\TMF\simeq \Z((j))_3$. Thus, by $\theta$-LES, we partially determine the structure of $\G_mL_{K(1)}L_{K(2)}\TMF$.
\begin{prop}
The strict units spectrum of $L_{K(1)}L_{K(2)}\TMF$ is discrete and contains a subgroup $\F_3^\times$ in $\pi_0$.
Besides, the forgetful map from this subgroup to $\Z_3^\times\subset\pi_0\glone L_{K(1)}L_{K(2)}\TMF$ injects onto the Teichm\"{u}ller lifts.
\end{prop}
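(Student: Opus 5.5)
The plan follows the same route as the $p=3$ computation of $\G_mL_{K(1)}\TMF$, namely the $\theta$-LES of \cref{sec4} applied to $R=L_{K(1)}L_{K(2)}\TMF$ at $p=3$, which is $K(1)$-local.

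\textbf{Step 1: homotopy groups.} From $\pi_*L_{K(2)}\TMF=\pi_*\TMF_{(3,c_4)}$ and \cite[Proposition 1.6.8]{BehrensHandbook}, we have $\pi_*L_{K(1)}L_{K(2)}\TMF\simeq \pi_*\TMF_{(3,c_4)}[c_4^{-1}]_3$. As at $p=2$, this is obtained from $\pi_*L_{K(1)}\TMF\simeq\pi_*\KO[j^{\pm}]_3$ by replacing $\Z[j^{\pm}]_3$ with $\Z((j))_3$. In particular $\pi_*R$ is concentrated in degrees divisible by $4$, and $\pi_0R\simeq\Z((j))_3$.

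\textbf{Step 2: discreteness.} The fiber sequence $\G_m(R)\to\GL_1(R)\to\Omega^\infty R$ gives, for $n\geq1$, an exact sequence
\[\pi_{n+1}R\to\pi_n\G_m(R)\to\pi_nR\xrightarrow{\theta_n}\pi_nR.\]
For $n=1,2$ both outer groups vanish, because $\pi_1,\pi_2,\pi_3$ of $R$ are zero by 4-concentration. Hence $\pi_1\G_m(R)=\pi_2\G_m(R)=0$. By \cref{truncated}, $\G_m(R)$ is 2-truncated, so it is discrete. It also equals $\ker(\theta_0)$ on $\pi_0R^\times$, because the cokernel of $\theta_1$ lives in $\pi_1R=0$.

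\textbf{Step 3: the subgroup $\F_3^\times$.} We use the $\E_\infty$-map $L_{K(1)}\TMF\to R$. Naturality of $\theta$ gives an induced map $\G_mL_{K(1)}\TMF\to\G_m(R)$. The source is $\F_3^\times$, embedded in $\pi_0$ as the Teichmüller lifts in $\Z_3\subset\Z[j^\pm]_3$. The map on $\pi_0$ is the inclusion $\Z[j^{\pm}]_3\to\Z((j))_3$, which is injective and restricts to the identity on the constants $\Z_3$. So $\pm1\in\Z_3^\times$ maps to $\pm1$, and these lie in $\ker\theta_0$. Alternatively, one checks directly that $\theta_0$ restricted to $\Z_3$ is the unique $\delta$-structure $\delta(x)=(x-x^3)/3$, which kills $\pm1$. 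Thus $\F_3^\times$ injects into $\pi_0\G_m(R)$, landing on the Teichmüller lifts in $\Z_3^\times\subset\pi_0\GL_1R$.

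\textbf{Main obstacle.} The only delicate point is Step 1: making sure that the identification of $\pi_0R$ with $\Z((j))_3$ is compatible with the map from $L_{K(1)}\TMF$, and that the odd-degree groups really vanish. Once that holds, the rest is formal. We do not attempt to compute the full kernel of $\theta_0$ on $\Z((j))_3^\times$, which would require explicit knowledge of $\delta(j)$, because the statement claims only a subgroup.
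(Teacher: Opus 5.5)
Your proposal is correct and follows essentially the same route as the paper. The 4-concentration of $\pi_*L_{K(1)}L_{K(2)}\TMF$ with $\pi_0\simeq\Z((j))_3$, combined with the $\theta$-LES and the 2-truncatedness of \cref{truncated}, gives discreteness and $\pi_0\G_m=\ker\theta_0$; the constants $\Z_3$, which carry the unique $\delta$-structure, then contribute the Teichm\"{u}ller lifts $\F_3^\times$, with no knowledge of $\delta(j)$ needed, exactly as in the paper's treatment.
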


\subsection{$p\geq 5$}
We use the same approach as $p=2,3$, but the completion formula is slightly different, as the relevant completion is not controlled by $c_4$ but rather a different modular form, with more than one zero.

Indeed, we have (cf. \cite[Rem 1.6.16]{BehrensHandbook})
$\pi_*L_{K(2)}\TMF=\pi_*\TMF_{(p,E_{p-1})}$
for $E_{p-1}$ the normalized Eisenstein modular form. Due to \cref{prop_modular_form_identities}, $E_{p-1}$ is equal to $c_4^{\epsilon_1}c_6^{\epsilon_2}\Delta^kf(j)$ in $\pi_{2p-2}\TMF_p$, and the zeros of $f(j)$ coincide with the supersingular $j$-values $a_i$ excluding 0 and 1728. At $\pi_0$, this completion is thus the same as $\mathbb{Z}[j]_{(p,ss_p(j))}$, where $ss_p(j)$ is an integral lift of the supersingular polynomial, i.e., the product of $(j-a_i)$'s. It follows from the action of $\mathrm{Gal}(\F_{p^2}/\F_p)$ on the isomorphism classes of supersingular elliptic curves $S_p$ that, modulo $p$, the polynomial $ss_p(j)$ can be decomposed into the product
\[ss_p(j)=\prod (j-a_i)\times\prod (j^2-b_kj+c_k).\]
Here, the first factor consists of supersingular $j$-invariants in $\F_p$. And the second product runs over all orbits $(a_i,a_{i'})$ of order 2---that is, $a_i,a_{i'}\in \F_{p^2}\backslash\F_p$ are conjugate to each other via Frobenius, $b_k:=a_i+a_{i'}$ and $c_k:=a_ia_{i'}$ are thus elements in $\F_p$. There are in total $N_p$ coprime factors appearing in $ss_p(j)$, and by Hensel's lemma, such a factorization lifts to $\Z_p$.
So we find that
\[\pi_*L_{K(1)}L_{K(2)}\TMF=\left(\prod\Z((j-a_i))_p \times \prod \Z[j]^\wedge_{(p,j^2-b_kj+c_k)}[(j^2-b_kj+c_k)^{-1}]_p\right)[b^\pm].\]
\begin{war}
    The residue field of $\Z[j]^\wedge_{(p,j^2-b_kj+c_k)}$ is not $\F_p$ but $\F_{p^2}$, since it is equal to $\F_p[j]/(j^2-b_kj+c_k)$. In particular, there are the Teichm\"{u}ller lifts $\F_{p^2}^\times\subset W(\F_{p^2})$ whose image of $\delta$ vanishes.
\end{war}
By construction, modulo $p$, the Frobenius homomorphism fixes each factor, inducing an isomorphism of $\delta$-rings for $\pi_0L_{K(1)}L_{K(2)}\TMF$.
Again, due to the 4-concentrated property, the $\theta$-LES is only possibly non-trivial in $\pi_0$, and $\pi_0\G_m$ is exactly $\mathrm{Ker}(\theta_0)$, where each connected component (of the corresponding scheme) contributes a factor of $\F_p^\times$ or $\F_{p^2}^\times$. Thus, regardless of the other Laurent series in $\mathrm{Ker}(\theta_0)$, we find that

 \begin{prop}
The strict units spectrum of $L_{K(1)}L_{K(2)}\TMF$ is discrete and contains a subgroup $$(\F_p^\times)^{\oplus(2N_p-\#S_p) }\oplus(\F_{p^2}^\times)^{\oplus(\#S_p-N_p)}$$ in $\pi_0$.
Besides, the map from this subgroup to $\pi_0\glone L_{K(1)}L_{K(2)}\TMF$ injects onto the Teichm\"{u}ller lifts described above.
\end{prop}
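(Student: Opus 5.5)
The plan is to run the $\theta$-LES of \cref{sec4} for $R=L_{K(1)}L_{K(2)}\TMF$ at $p\geq5$, using the description of homotopy groups displayed above:
\[\pi_*R=\Big(\prod_i\Z((j-a_i))_p\times\prod_k \Z[j]^\wedge_{(p,j^2-b_kj+c_k)}[(j^2-b_kj+c_k)^{-1}]_p\Big)[b^\pm],\]
with $|b|=4$. There are three steps: show discreteness, identify the Teichm\"uller subgroup inside $\mathrm{Ker}(\theta_0)$, and check the comparison map to $\glone R$.

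\textbf{Discreteness.} Since $\pi_*R$ is concentrated in degrees divisible by $4$, we have $\pi_n\glone R=\pi_nR=0$ for $n=1,2,3$. The fiber sequence $\G_m(R)\to\GL_1(R)\to\Omega^\infty R$ then gives $\pi_1\G_mR=\mathrm{coker}(\theta_2)=0$ and $\pi_2\G_mR\subset\pi_2\glone R=0$. \cref{truncated} says $\G_m R$ is $2$-truncated, and combining these shows that $\G_mR$ is concentrated in $\pi_0$. Exactness at the bottom of the LES then identifies $\pi_0\G_mR=\mathrm{Ker}(\theta_0:\pi_0R^\times\to\pi_0R)$.

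\textbf{Teichm\"uller subgroup.} Each factor of $\pi_0R$ is $p$-torsionfree, so $\theta_0$ is the $\delta$-structure attached to the Frobenius lift $\varphi(x)=x^p+p\theta_0(x)$. Hence $\theta_0(x)=0$ if and only if $\varphi(x)=x^p$. The factorization of $ss_p(j)$ modulo $p$ has $N_p$ coprime factors, and by Hensel's lemma it lifts to $\Z_p$. So $\mathrm{Spec}\,\pi_0R$ has exactly $N_p$ connected components, and $\varphi$ preserves each of them: reduced modulo $p$ it is the absolute Frobenius, which fixes each idempotent, and idempotents lift uniquely. Therefore $\varphi$ restricts to each factor.
\begin{itemize}
\item On a factor $\Z((j-a_i))_p$ with $a_i\in\F_p$, the subring $\Z_p$ is stable under $\varphi$, and there $\varphi$ must be the identity, the unique $\delta$-structure on $\Z_p$. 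The elements with $\varphi(x)=x^p$ are then the roots of unity $\mu_{p-1}=\F_p^\times$.
\item On a factor $\Z[j]^\wedge_{(p,j^2-b_kj+c_k)}[\ldots]_p$, the residue ring of the completion is $\F_{p^2}$, as the warning notes. This complete local ring therefore contains $W(\F_{p^2})$, generated over $\Z_p$ by $\mu_{p^2-1}$, namely the unique Teichm\"uller lifts of roots of $x^{p^2-1}-1$. Since $\varphi$ is a ring endomorphism, it sends a $(p^2-1)$-th root of unity $\zeta$ to another such root. That root reduces to $\bar\zeta^{\,p}$ modulo $p$, so by uniqueness of Hensel lifts $\varphi(\zeta)=\zeta^p$, i.e.\ $\theta_0(\zeta)=0$. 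This gives a subgroup $\F_{p^2}^\times$ in this factor.
\end{itemize}

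\textbf{Counting and the map to $\glone R$.} The orbits of size one number $2N_p-\#S_p$ and contribute $\F_p^\times$ each; the orbits of size two number $\#S_p-N_p$ and contribute $\F_{p^2}^\times$ each. Injectivity into $\pi_0\glone R$ holds by construction, since $\pi_0\G_mR\to\pi_0R^\times$ is the inclusion of $\mathrm{Ker}(\theta_0)$, and its image consists precisely of the Teichm\"uller lifts described.

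The step I expect to be most delicate is justifying that $\varphi$ preserves each factor and agrees with the standard Witt-vector Frobenius on the coefficient ring $W(\F_{p^2})$ embedded in that factor. I would handle it by the uniqueness-of-idempotent-lifts and Hensel uniqueness arguments above, rather than attempting to compute $\theta_0(j)$ explicitly. That explicit computation is exactly what the statement avoids needing, since it only claims a subgroup.
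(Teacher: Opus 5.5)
Your proposal is correct and follows the paper's route: the $4$-concentration of $\pi_*L_{K(1)}L_{K(2)}\TMF$, together with \cref{truncated} for degrees $\geq 3$, gives discreteness and $\pi_0\G_m=\mathrm{Ker}(\theta_0)$, and each of the $N_p$ connected components of $\mathrm{Spec}\,\pi_0$ contributes its Teichm\"uller lifts $\F_p^\times$ or $\F_{p^2}^\times$ to that kernel. Your idempotent-lifting argument (that $\varphi$ preserves each factor) and your Hensel-uniqueness argument (that $\varphi(\zeta)=\zeta^p$) spell out what the paper only asserts in one line, namely that ``the Frobenius homomorphism fixes each factor'' and, in the warning, that $\delta$ vanishes on $\F_{p^2}^\times\subset W(\F_{p^2})$.
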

\begin{ques}
    Does $\mathrm{Ker}(\theta_0)$ consist of these Teichm\"{u}ller lifts? In other words, does the subgroup identified in this section exhaust
    $\pi_0\G_mL_{K(1)}L_{K(2)}\TMF$?
\end{ques}
The authors have verified the cases $p=2$ or $p=3$, based on the first several estimations recorded in \cref{delta}. However, a generalization to arbitrary prime numbers is considerably difficult due to the complexity of both supersingular polynomials and modular polynomials. Given that what we compute is sufficient for the main theorem, we defer a systematic treatment of the general case to future work.

\section{Assembling the fracture squares}\label{sec7}
In this section we use the chromatic and arithmetic fracture squares to assemble the information gathered above into a proof of our main theorem.

\subsection{Chromatic fracture}
Since both $\glone(-)$ and $\G_m(-)=\Map(\Z,\glone(-))$ are right adjoints, they commute with limits. Applying them to the chromatic fracture pullback square for $\TMF_p$ is thus again a pullback square:
\[
\begin{tikzcd}
    \G_m\TMF_p\arrow[r]\arrow[d]&\arrow[d]\G_mL_{K(2)}\TMF \\
\G_mL_{K(1)}\TMF\arrow[r]&\G_mL_{K(1)}L_{K(2)}\TMF.\end{tikzcd}\]
Since $\G_m$ of a $p$-complete, $L_2$-local ring is 3-truncated, we may restrict attention to the following ``critical range'' of degrees, where the associated LES looks like
\[
\begin{tikzcd}
    \pi_3\G_m\TMF_p\arrow[r,"\sim"]&\pi_3\G_mL_{K(2)}\TMF \arrow[r]& 0 \\
\pi_2\G_m\TMF_p\arrow[r,hook]&\pi_2\G_mL_{K(1)}\TMF\oplus\pi_2\G_mL_{K(2)}\TMF \arrow[r]& \pi_2\G_mL_{K(1)}L_{K(2)}\TMF \\
\pi_1\G_m\TMF_p\arrow[r]&\pi_1\G_mL_{K(1)}\TMF\oplus\pi_1\G_mL_{K(2)}\TMF \arrow[r]& \pi_1\G_mL_{K(1)}L_{K(2)}\TMF \\
\pi_0\G_m\TMF_p\arrow[r]&\pi_0\G_mL_{K(1)}\TMF\oplus\pi_0\G_mL_{K(2)}\TMF \arrow[r]& \pi_0\G_mL_{K(1)}L_{K(2)}\TMF.\\
\end{tikzcd}
\]
Filling in with information from previous sections, we get the following:
\subsubsection{$p=2$}
\[
\begin{tikzcd}
    \pi_3\G_m\TMF_2\arrow[r,"\sim"]&\Z_2 \arrow[r]& 0 \\
\pi_2\G_m\TMF_2\arrow[r,hook]& \Z_2\arrow[r]& 0\\
\pi_1\G_m\TMF_2\arrow[r]&\Z/2\{\eta\} \arrow[r]& \Z/2\{\eta\} \\
\pi_0\G_m\TMF_2\arrow[r]&(\Z/2)^\infty\arrow[r]& \pi_0\G_mL_{K(1)}L_{K(2)}\TMF. \\
\end{tikzcd}
\]
 As indicated from the notation, $\Z/2\{\eta\}$ survives in $\G_mL_{K(1)}L_{K(2)}\TMF$, so it does not lift to $\G_m\TMF_2$. 
Indeed, $\eta$ admits a strict structure in $\S_{K(1)}$, and thus its image in every $K(1)$-local $\E_\infty$-ring inherits a strict structure. All maps are the obvious ones, besides the map at the bottom right---this was shown above to coincide with the projection $(\Z/2)^\infty\oplus(\Z/2)^\infty\to (\Z/2)^\infty$ onto a summand (the source is spanned by $\{\eta, j^{2\Z+1}\eta\}$, while the target is spanned by $\{\eta, j^{-(2\mathbb{N}+1)}\eta\}$) followed by an inclusion (with cokernel $\mathrm{Ker}(\theta_0)$).  This gives
\[\G_m\TMF_2=(\Z/2)^\infty\oplus \Z_2[2]\oplus\Z_2[3].\]
The diagram for $\tmf_2$ looks identical, but this time the map on the bottom right is an isomorphism ($(\Z/2)^\infty\to (\Z/2)^\infty$, the source also being spanned by$\{\eta, j^{-(2\mathbb{N}+1)}\eta\}$) followed by an inclusion. Thus we get
\[\G_m\tmf_2= \Z_2[2]\oplus\Z_2[3].\]

\subsubsection{$p=3$}
Again, in the critical range we have
\[
\begin{tikzcd}
    \pi_3\G_m\TMF_3\arrow[r,"\sim"]&\Z_3 \arrow[r]& 0 \\
\pi_2\G_m\TMF_3\arrow[r,"\sim"]& \Z_3\arrow[r]& {0} \\
\pi_1\G_m\TMF_3\arrow[r,"\sim"]&0 \arrow[r]& { 0} \\
\pi_0\G_m\TMF_3\arrow[r,hook]&\F_3^\times\oplus\F_3^\times\arrow[r]& { \pi_0\G_mL_{K(1)}L_{K(2)}\TMF}. \\
\end{tikzcd}
\]
In terms of the right bottom corner, we find a subgroup $\F_3^\times$ corresponding to the Teichm\"{u}ller lifts, and the map $\F_3^\times\oplus \F_3^\times \to \pi_0\G_mL_{K(1)}L_{K(2)}\TMF$ factors through this subgroup, whence it is the product of two pieces of identity maps. This gives
\[\G_m\TMF_3=\F_3^\times\oplus \Z_3[2]\oplus\Z_3[3].\]
The LES is identical for $\tmf_3$, giving
\[\G_m\tmf_3=\G_m\TMF_3.\]

\subsubsection{$p\geq 5$}
Just like before, we get
\[
\begin{tikzcd}
    \pi_3\G_m\TMF_p\arrow[r,"\sim"]&\Z_p^{N_p} \arrow[r]& 0 \\
\pi_2\G_m\TMF_p\arrow[r,"\sim"]& \Z_p^{N_p}\arrow[r]& 0 \\
\pi_1\G_m\TMF_p\arrow[r,"\sim"]& 0 \arrow[r]& 0 \\
\pi_0\G_m\TMF_p\arrow[r,hook]& \F_p^\times{ \oplus}(\F_p^\times)^{\oplus(2N_p-\#S_p) }\oplus(\F_{p^2}^\times)^{\oplus(\#S_p-N_p)}\arrow[r]& {\pi_0\G_mL_{K(1)}L_{K(2)}\TMF}. \\
\end{tikzcd}
\]
Recall that $N_p$ is the number of $\mathrm{Gal}(\Fpb/\F_p)$-orbits of isomorphism classes $S_p$ of supersingular elliptic curves over $\Fpb$. For the undetermined maps among $\pi_0$, we find a subgroup $$(\F_p^\times)^{\oplus(2N_p-\#S_p) }\oplus(\F_{p^2}^\times)^{\oplus(\#S_p-N_p)}\subset \pi_0\G_mL_{K(1)}L_{K(2)}\TMF,$$
such that each factor corresponds to the Teichm\"{u}ller lifts of a connected component of the affine scheme $\mathrm{Spec}(\pi_0\G_mL_{K(1)}L_{K(2)}\TMF)$. Consequently, by the identification of $\pi_0\G_mL_{K(1)}\TMF$, the bottom right map factors through this subgroup, where the restriction to the first factor $\F_p^\times$ is the diagonal map, while the restriction to the second one is the identity map.
This gives
\[\G_m\TMF_p=\F_p^\times\oplus\Z_p^{N_p}[2]\oplus\Z_p^{N_p}[3].\]
Just as at $p=3$, we find that $\G_m\tmf_p$ is isomorphic to $\G_m\TMF_p$.

\subsection{Arithmetic fracture}\label{sec7.2}
Next, we apply $\G_m$ to the arithmetic fracture square of $\TMF$, to assemple the $p$-complete information of the previous section:
\[
\begin{tikzcd}
    \G_m\TMF\arrow[r]\arrow[d]&\arrow[d]\prod_p\G_m\TMF_p \\
\G_m(\Q\otimes\TMF)\arrow[r]&\G_m(\Q\otimes\prod_p\TMF_p).
\end{tikzcd}
\]
The $\G_m's$ of the rational spectra in the bottom row are easy: $\Q\otimes \TMF=\Q[c_4,c_6, \Delta^{-24}]$, and $\Q\otimes\TMF_p=\Q[c_4,c_6, \Delta^{-24}]_p$. We get an LES that reads

\[
\begin{tikzcd}
\pi_{4k}\G_m\TMF\arrow[r,hook]& \Q[x]\oplus 0\arrow[r]& \Q\otimes\prod_p\Z[x]_p\\
\pi_3\G_m\TMF\arrow[r]&0\oplus\prod_p\pi_3\G_m\TMF_p \arrow[r]& 0 \\
\pi_2\G_m\TMF\arrow[r,"\sim"]&0\oplus\prod_p\pi_2\G_m\TMF_p\arrow[r]& 0 \\
\pi_1\G_m\TMF\arrow[r,"\sim"]&0\oplus\prod_p\pi_1\G_m\TMF_p\arrow[r]& 0 \\
\pi_0\G_m\TMF\arrow[r]&\Q^\times\oplus\prod_p\pi_0\G_m\TMF_p\arrow[r]& \left(\Q\otimes\prod_p\Z[x]_p\right)^\times,\\
\end{tikzcd}
\]
where the top row will also be repeated in all degrees 0 modulo 4, and everything not depicted in this diagram is zero.

Filling in the results from above, this LES becomes
\[
\begin{tikzcd}
\pi_4\G_m\TMF\arrow[r,hook]& \Q[x]\oplus 0\arrow[r]& \Q\otimes\prod_p\Z[x]_p\\
\pi_3\G_m\TMF\arrow[r]&\prod_p\Z_p^{N_p} \arrow[r]& 0 \\
\pi_2\G_m\TMF\arrow[r,"\sim"]&\prod_p\Z_p^{N_p} \arrow[r]& 0 \\
\pi_1\G_m\TMF\arrow[r,"\sim"]&0\arrow[r]& 0 \\
\pi_0\G_m\TMF\arrow[r]&\Q^\times\oplus(\Z/2)^\infty\oplus\prod_p\F_p^\times\arrow[r]& \left(\Q\otimes\prod_p\Z[x]_p\right)^\times. \\
\end{tikzcd}
\]

Recall from the introduction that $A$ is the rational vector space defined as the cofiber of the obvious map
\[\Q\otimes \Z[x]\rightarrow \Q\otimes\prod_p\Z[x]_p,\]
and $N_p$ is the number of $\mathrm{Gal}(\Fpb/\F_p)$-orbits of isomorphism classes of supersingular elliptic curves over $\Fpb$. Thus we finally arrive at our main theorem.

\begin{thm}\label[thm]{mainthm}
    There exists an equivalence of connective $\Z$-modules, where the numbers on the right indicate the degree of the corresponding Eilenberg--Maclane spectrum, and $k> 1$,
\begin{align*}
    \G_m\TMF =
    &(\Z/2)^\infty &[0] \\
    &\prod_p\Z_p^{N_p} &[2] \\
    &\prod_p\Z_p^{N_p}\oplus A &[3] \\
    & A&[4k-1].
\end{align*}
Moreover, the map $\G_m\tmf\to\G_m\TMF$ exhibits the source as the connected cover of the target.
\end{thm}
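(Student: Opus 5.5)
The plan is to read off the homotopy groups of $\G_m\TMF$ from the long exact sequence of the arithmetic fracture pullback displayed just above, degree by degree. Then I would show that a connective $\Z$-module with these homotopy groups is a sum of Eilenberg--MacLane spectra, and finally compare with $\tmf$.

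\textbf{Degree-wise computation.} In degrees $1,2$ the displayed LES gives $\pi_1\G_m\TMF=0$ and $\pi_2\G_m\TMF\simeq\prod_p\Z_p^{N_p}$, using the chromatic fracture results for $\pi_2\G_m\TMF_p$ at every prime. In degree $3$, the boundary of the degree-$4$ row contributes: $\pi_3\G_m\TMF$ is an extension of $\prod_p\Z_p^{N_p}$ by $\mathrm{coker}\bigl(\Q[x]\to\Q\otimes\prod_p\Z[x]_p\bigr)=A$. Here the rational units $\G_m$ of $\Q\otimes\TMF$ contribute $\pi_{4k}$ terms through $\glone$ of a rational ring, whose higher homotopy agrees with the additive homotopy. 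The map $\Q[x]\to\Q\otimes\prod_p\Z[x]_p$ is injective, so $\pi_4\G_m\TMF=0$. The same argument in degree $4k$ for $k>1$ gives $\pi_{4k}=0$ and $\pi_{4k-1}=A$.

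\textbf{Degree $0$.} Here $\pi_0\G_m\TMF$ is the kernel of
\[\Q^\times\oplus(\Z/2)^\infty\oplus\prod_p\F_p^\times\to\Bigl(\Q\otimes\prod_p\Z[x]_p\Bigr)^\times,\]
and the cokernel of the $\pi_1$ row is zero. To compute the kernel, I would identify $\F_p^\times$ and $\Q^\times$ with constant units and note that their images are compatible constants. A rational number mapping to a Teichm\"uller lift at every $p$ must be $\pm1$, and at $p=2$ the Teichm\"uller group is trivial. I expect the $\pm1$ to cancel against $\F_p^\times$ components in such a way that the intersection is only the $(\Z/2)^\infty$ coming from $\pi_0\G_m\TMF_2$, which maps to zero rationally since it is torsion in $\pi_0\G_m$ but lives in $\glone$ as classes trivial in $\pi_0$, being detected only through the $K(1)$-local $\theta$-LES. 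This kernel identification is the step I expect to be the main obstacle, and I would settle it by checking that the map on $\prod_p\F_p^\times$ into the rational adelic units is injective modulo the diagonal image of $\Q^\times$.

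\textbf{Splitting and comparison with $\tmf$.} Since $A$ is a rational vector space, hence injective, and $\Z_p$-modules in adjacent degrees admit no nontrivial $k$-invariants for connective $\Z$-modules, the Postnikov tower splits. Indeed, $\Z$-module $k$-invariants are $\mathrm{Ext}^{\ge2}_\Z$ classes, and these vanish, so $\G_m\TMF$ is a sum of Eilenberg--MacLane spectra. For $\tmf$ I would rerun the same squares, using $\G_m\tmf_p\simeq\G_m\TMF_p$ for odd $p$ and $\G_m\tmf_2=\Z_2[2]\oplus\Z_2[3]$. The rationalization of $\tmf$ is $\Q[c_4,c_6]$, which has the same higher homotopy as before in the relevant degrees but with only $\pm1$ units in $\pi_0$. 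This gives $\pi_0\G_m\tmf=0$ and isomorphisms in positive degrees, so $\G_m\tmf\simeq\tau_{\ge1}\G_m\TMF$ via the natural map.
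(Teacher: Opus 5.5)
For $\G_m\TMF$ you follow the paper's route: you read off the arithmetic fracture sequence degree by degree, and the splitting into Eilenberg--MacLane pieces is automatic for connective $\Z$-modules (a point the paper leaves implicit). In degree $0$ your hedging is unnecessary, and your stated reason is slightly off. The classes in $\pi_0\G_m\TMF_2=(\Z/2)^\infty$ die in $\bigl(\Q\otimes\prod_p\Z[x]_p\bigr)^\times$ because they come from $\mathrm{coker}\,\theta_1$, hence are $1$ in $\pi_0\glone L_{K(1)}\TMF$. Being torsion alone would not suffice, as $-1$ shows. So a kernel element $q\in\Q^\times$ must be $1$ in the $2$-adic coordinate. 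That forces $q=1$, and then every Teichm\"uller component is trivial; nothing cancels.

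The genuine gap is the comparison with $\tmf$. Your claim that $\Q\otimes\tmf$ has the same higher homotopy as $\Q\otimes\TMF$ in the relevant degrees is false. $\pi_{4k}(\Q\otimes\tmf)$ is the finite-dimensional degree-$4k$ part of $\Q[c_4,c_6]$; in particular $\pi_4(\Q\otimes\tmf)=0$. By contrast, $\pi_{4k}(\Q\otimes\TMF)$ is an infinite-dimensional space of weakly holomorphic forms (e.g.\ $c_4^2c_6\Delta^{-1}$ in degree $4$). Also, $\pi_0(\Q\otimes\tmf)^\times=\Q^\times$, not $\{\pm1\}$, although $\pi_0\G_m\tmf=0$ still follows from the $2$-adic argument above.

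With the correct input the step cannot be repaired. The rational corners of the $\tmf$ square vanish in degrees $3$ and $4$, so $\pi_3\G_m\tmf\cong\prod_p\pi_3\G_m\tmf_p\cong\prod_p\Z_p^{N_p}$. By naturality of the two long exact sequences, its image in $\pi_3\G_m\TMF$ is a complement to the divisible summand $A\neq0$. For $k\ge2$, $\pi_{4k-1}\G_m\tmf$ is the cokernel of the map $\pi_{4k}(\Q\otimes\tmf)\to\Q\otimes\prod_p\pi_{4k}\tmf_p$, whose source is finite-dimensional, and this cokernel does not surject onto $A$. So $\G_m\tmf\to\G_m\TMF$ is not an isomorphism on $\pi_{4k-1}$, and the connected-cover clause does not follow; with these inputs it actually fails already in degree $3$.

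The paper does not supply this step either. Its argument compares only the $p$-complete corners $\G_m\tmf_p$ and $\G_m\TMF_p$, and never treats the rational corner of the square for $\tmf$.
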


\begin{rem}
    We also see that the map $\G_m\TMF\to\glone\TMF$ is zero on homotopy groups. Indeed, throughout the paper, the only elements that admitted strict structures were the Teichm\"{u}ller lifts, and $\eta$---the former do not even exist before $p$-completion, and the strict structure on the latter only exists after $K(1)$-localization. 
\end{rem}

\bibliographystyle{halpha}

\bibliography{references}

\end{document}